\newtheorem{thm}[equation]{Theorem}
\numberwithin{equation}{section}
\newtheorem{cor}[equation]{Corollary}
\newtheorem{rmk}[equation]{Remark}
\newtheorem{defin}[equation]{Definition}
\newtheorem{diag}[equation]{Diagram}
\newtheorem{prop}[equation]{Proposition}
\newtheorem{tab}[equation]{Table}
\begin{document}
\raggedbottom \voffset=-.7truein \hoffset=0truein \vsize=8truein
\hsize=6truein \textheight=8truein \textwidth=6truein
\baselineskip=18truept
\def\vareps{\varepsilon}
\def\mapright#1{\ \smash{\mathop{\longrightarrow}\limits^{#1}}\ }
\def\mapleft#1{\smash{\mathop{\longleftarrow}\limits^{#1}}}
\def\mapup#1{\Big\uparrow\rlap{$\vcenter {\hbox {$#1$}}$}}
\def\mapdown#1{\Big\downarrow\rlap{$\vcenter {\hbox {$\ssize{#1}$}}$}}
\def\on{\operatorname}
\def\spa{\on{span}}
\def\a{\alpha}
\def\bz{{\Bbb Z}}
\def\gd{\on{gd}}
\def\imm{\on{imm}}
\def\sq{\on{Sq}}
\def\ssp{\on{stablespan}}
\def\eps{\epsilon}
\def\br{{\Bbb R}}
\def\bc{{\Bbb C}}
\def\bh{{\Bbb H}}
\def\tfrac{\textstyle\frac}
\def\w{\wedge}
\def\b{\beta}
\def\A{{\cal A}}
\def\P{{\cal P}}
\def\zt{{\Bbb Z}_2}
\def\bq{{\Bbb Q}}
\def\ker{\on{ker}}
\def\coker{\on{coker}}
\def\u{{\cal U}}
\def\e{{\cal E}}
\def\exp{\on{exp}}
\def\wbar{{\overline w}}
\def\xbar{{\overline x}}
\def\ybar{{\overline y}}
\def\zbar{{\overline z}}
\def\ebar{{\overline e}}
\def\nbar{{\overline n}}
\def\mbar{{\overline m}}
\def\ubar{{\overline u}}
\def\et{{\widetilde E}}
\def\ni{\noindent}
\def\coef{\on{coef}}
\def\den{\on{den}}
\def\gd{{\on{gd}}}
\def\N{{\Bbb N}}
\def\Z{{\Bbb Z}}
\def\Q{{\Bbb Q}}
\def\R{{\Bbb R}}
\def\C{{\Bbb C}}
\def\Bin{\on{Bin}}
\title[Projective product spaces]
{Projective product spaces}
\author{Donald M. Davis}
\address{Department of Mathematics, Lehigh University\\Bethlehem, PA 18015, USA}
\email{dmd1@lehigh.edu}
\date{August 7, 2009}

\keywords{Immersions, span, projective space}
\thanks {2000 {\it Mathematics Subject Classification}:
55R25, 55P15, 57R42.}

\maketitle
\begin{abstract} Let $\nbar=(n_1,\ldots,n_r)$.  The quotient space $P_\nbar:=S^{n_1}\times\cdots\times S^{n_r}/(\xbar\sim-\xbar)$ is what we call a projective product space.
We determine the integral cohomology ring $H^*(P_\nbar)$ and the action of the Steenrod algebra on $H^*(P_\nbar;\zt)$. We give  a splitting of $\Sigma P_\nbar$ in terms of stunted real projective spaces, and determine when $S^{n_i}$ is a product factor of $P_\nbar$. We relate the immersion dimension and span of $P_\nbar$ to the
much-studied sectioning question for multiples of the Hopf bundle over real projective spaces.
We show that the immersion dimension of $P_\nbar$ depends only on $\min(n_i)$, $\sum n_i$, and $r$, and determine its precise value unless all $n_i\ge10$.
We also determine exactly when $P_\nbar$ is parallelizable.
 \end{abstract}

\section{Introduction}\label{intro} If $\nbar=(n_1,\ldots,n_r)$ with $n_i$ positive integers, let
$$P_{\nbar}=S^{n_1}\times\cdots\times S^{n_r}/((x_1,\ldots,x_r)\sim(-x_1,\ldots,-x_r)),$$
where $x_i\in S^{n_i}$.
This is a manifold of dimension $|\nbar|:=n_1+\cdots+n_r$, which we call a projective product space. If $\nbar=(n)$, then $P_\nbar=P^n$, the real
projective space.

There is a 1-dimensional vector bundle $\xi_\nbar$ over $P_\nbar$ for which the $k$-fold Whitney sum $k\xi_\nbar$ has total space
$$S^{n_1}\times\cdots\times S^{n_r}\times\br^k/((x_1,\ldots,x_r,t_1,\ldots,t_k)\sim(-x_1,\ldots,-x_r,-t_1,\ldots,-t_k)),$$
and its sphere bundle clearly satisfies
\begin{equation}\label{sph}S(k\xi_\nbar)=P_{(n_1,\ldots,n_r,k-1)}.\end{equation}
Thus each space $P_\nbar$ can be built up  iteratively as sphere bundles. For example,
$P_{(n_1,n_2,n_3)}=S((n_3+1)\xi_{(n_1,n_2)})$ with $\xi_{(n_1,n_2)}$ the line bundle over
$P_{(n_1,n_2)}=S((n_2+1)\xi_{n_1})$.

In Section \ref{cohsec}, we determine the $A$-algebra $H^*(P_\nbar;\zt)$ (Theorem \ref{cohthm}) and the algebra $H^*(P_\nbar;\bz)$ (Theorem \ref{intcoh}), and determine in Theorem \ref{Kthm} the ring $K^*(P_\nbar)$.
We show that $\Sigma P_\nbar$ splits as a wedge of desuspensions of stunted
projective spaces (Theorem \ref{splitthm}), and determine when $S^{n_i}$ is a product factor of $P_\nbar$ (Theorem \ref{he}).   In Section \ref{mfsec}, we relate the immersion dimension (Theorem \ref{immthm}) and span (Theorem \ref{spanthm}) of $P_\nbar$ to results about sectioning
multiples of the Hopf bundles over projective spaces, and we determine exactly when $P_\nbar$ is parallelizable (Theorem \ref{pithm}). In Section \ref{numsec}, we use known results for projective spaces to give some
numerical results for the immersion dimension and span of $P_\nbar$. We give the precise value of the immersion dimension of $P_\nbar$ unless
all $n_i\ge10$.

\section{Cohomology of $P_\nbar$ and a splitting}\label{cohsec}

The first  property of the spaces $P_\nbar$ which we study is
their mod-2 cohomology. Here and throughout, $\Lambda(-)$ denotes an exterior algebra,  $\zt=\bz/2$,  $A$ is the mod 2 Steenrod algebra,
and $\sq=\sum\limits_{n\ge0}\sq^n$.
\begin{thm}\label{cohthm} Let $\nbar=(n_1,\ldots,n_r)$ with $n_1\le n_2\le\cdots\le n_r$.
If $n_1<n_2$, or $n_1$ is odd, then there is an isomorphism of $A$-algebras
$$H^*(P_\nbar;\zt)\approx\zt[y]/y^{n_1+1}\otimes\Lambda[x_{n_2},\ldots,x_{n_r}]$$
with $|x_{n_i}|=n_i$, $|y|=1$, $\sq(x_{n_i})=x_{n_i}(1+y)^{n_i+1}$, and  $\sq(y)=y(1+y)$.
If $n_1$ is even and $n_1=\cdots=n_k<n_{k+1}$ for some $k>1$, then $H^*(P_\nbar;\zt)$ is as above, except that
$x_{n_i}^2=y^{n_1}x_{n_i}$ for $2\le i\le k$.
\end{thm}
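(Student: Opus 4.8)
The plan is to induct on $r$ using the iterated sphere-bundle description \eqref{sph}, with base case $r=1$, where $P_{(n_1)}=P^{n_1}$ has $H^*(P^{n_1};\zt)=\zt[y]/y^{n_1+1}$ and $\sq(y)=y(1+y)$. For the inductive step write $\nbar'=(n_1,\dots,n_{r-1})$ and regard $P_\nbar$ as the unit sphere bundle $\pi\colon P_\nbar=S((n_r+1)\xi_{\nbar'})\to P_{\nbar'}$ of the rank-$(n_r+1)$ bundle $(n_r+1)\xi_{\nbar'}$, with fibre $S^{n_r}$. Its mod-$2$ Euler class is the top Stiefel--Whitney class $w_{n_r+1}((n_r+1)\xi_{\nbar'})=y^{n_r+1}$, where $y=w_1(\xi_{\nbar'})$; since $n_r\ge n_1$ and $y^{n_1+1}=0$ in $H^*(P_{\nbar'})$ by the inductive hypothesis, this class is zero. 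The Gysin sequence then shows that integration over the fibre $H^{n_r}(P_\nbar)\to H^0(P_{\nbar'})$ is onto, so there is a class $x_{n_r}$ restricting to the fibre generator, and the Leray--Hirsch theorem identifies $H^*(P_\nbar;\zt)$ with the free $H^*(P_{\nbar'};\zt)$-module on $\{1,x_{n_r}\}$. This gives at once the stated module structure $\zt[y]/y^{n_1+1}\otimes\Lambda[x_{n_2},\dots,x_{n_r}]$ together with all products except the squares $x_{n_i}^2$, which are the only relations left to determine.

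The squaring relations, and with them the even/odd dichotomy, will come out of the $\sq$-action, which I would compute next. Since $y=w_1(\xi_{\nbar'})$ is one-dimensional, $\sq(y)=y(1+y)$. For the new generator $x_{n_r}$ I would use the Thom space of $E=(n_r+1)\xi_{\nbar'}$: in the long exact sequence of the pair $(D(E),S(E))$, under the identification $H^*(D(E),S(E))\cong\widetilde H^*(\text{Th}\,E)$ the connecting homomorphism $\delta$ carries the fibre class $x_{n_r}$ to the Thom class $U$, and the Thom--Wu formula gives $\sq(U)=U\,w(E)=U(1+y)^{n_r+1}$. As $\delta$ commutes with $\sq$ and is $H^*(P_{\nbar'})$-linear with kernel $\on{im}\pi^*$, this forces $\sq(x_{n_r})=x_{n_r}(1+y)^{n_r+1}+\pi^*\theta$ for some $\theta\in H^*(P_{\nbar'})$; the formula for the remaining generators $x_{n_2},\dots,x_{n_{r-1}}$ then persists because $\pi^*$ is a map of $A$-algebras.

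The main obstacle is to show that the correction term $\pi^*\theta$ vanishes, since the Thom-space computation is blind to $\ker\delta=\on{im}\pi^*$. To remove it I would pull back along the double cover $p\colon S^{n_1}\times\cdots\times S^{n_r}\to P_\nbar$: there $p^*y=0$, while $p^*x_{n_r}$ lands in $\Lambda[a_1,\dots,a_r]$, on which $\sq$ acts as the identity, so $p^*\sq(x_{n_r})=p^*x_{n_r}$. Combining this with instability (which already kills the components of $\theta$ in degrees exceeding $2n_r$) and, where necessary, replacing $x_{n_r}$ by $x_{n_r}+\pi^*\eta$ for a suitable $\eta\in H^{n_r}(P_{\nbar'})$, should pin down $\theta=0$ and establish $\sq(x_{n_r})=x_{n_r}(1+y)^{n_r+1}$. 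This is the step requiring the most care.

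Finally I would read the squares off the top operation. Since $|x_{n_i}|=n_i$, $x_{n_i}^2=\sq^{n_i}(x_{n_i})$ is the degree-$2n_i$ part of $x_{n_i}(1+y)^{n_i+1}$, namely $\binom{n_i+1}{n_i}y^{n_i}x_{n_i}=(n_i+1)\,y^{n_i}x_{n_i}$. If $n_i>n_1$ then $y^{n_i}=0$, so $x_{n_i}^2=0$; this disposes of the case $n_1<n_2$ irrespective of parity. If instead $n_i=n_1$, as in the tie $n_1=\cdots=n_k$ with $k>1$, then $y^{n_1}\ne 0$ and $(n_1+1)\bmod 2$ equals $0$ or $1$ according as $n_1$ is odd or even, yielding $x_{n_i}^2=0$ or $x_{n_i}^2=y^{n_1}x_{n_i}$. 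Thus the single binomial coefficient $\binom{n_1+1}{n_1}$ accounts uniformly for both cases of the theorem.
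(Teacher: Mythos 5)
Your first two paragraphs are essentially sound and run parallel to the paper's argument in different clothing: where the paper splits the cofibration $P_\nbar\to P_\mbar\to T((n_r+1)\xi_\mbar)$, you run the Gysin sequence with vanishing mod-$2$ Euler class plus Leray--Hirsch, and both routes yield the free module structure and the identity $\sq(x_{n_r})=x_{n_r}(1+y)^{n_r+1}+\pi^*\theta$ for some $\theta$ in the cohomology of the base. The genuine gap is exactly the step you flag: none of the three tools you propose can force $\theta=0$. The double cover $\rho\colon S^{n_1}\times\cdots\times S^{n_r}\to P_\nbar$ factors through the double cover $\rho'$ of the base, so it only shows $(\rho')^*\theta=0$, i.e.\ that $\theta$ lies in $\ker\,(\rho')^*$, which contains the entire ideal $(y)$ --- precisely where any dangerous correction term lives. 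Instability constrains nothing in degrees $\le 2n_r$, which is where $\theta$ lives. And replacing $x_{n_r}$ by $x_{n_r}+\pi^*\eta$ changes $\theta$ by $\sq(\eta)+\eta(1+y)^{n_r+1}$, which can vanish identically. Concretely, take $\nbar=(1,1,1)$: the base is $P_{(1,1)}$ with $H^*=\zt[y]/y^2\otimes\Lambda[x]$, and the possible correction is $\theta\in H^2(P_{(1,1)})$, spanned by $yx$. Here $(\rho')^*(yx)=0$; the degree equals $2n_r$; and for each $\eta\in H^1(P_{(1,1)})$ one computes $\sq(\eta)+\eta(1+y)^2=\eta+\eta=0$, so no re-choice of $x_{n_r}$ moves $\theta$ at all. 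Your toolkit therefore cannot decide between $\theta=0$ and $\theta=yx$, and the difference matters: since $|x_{n_3}|=1$ we have $x_{n_3}^2=\sq^1(x_{n_3})=\theta$, so both the $A$-action and the ring structure (whose squares are read off from $\sq^{n_i}$, in either case of the theorem) are left undetermined.

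The idea you are missing is the paper's section of the projection: because $n_{r-1}\le n_r$, the map $j\colon P_\mbar\to P_\nbar$ defined by $j([x_1,\ldots,x_{r-1}])=[x_1,\ldots,x_{r-1},x_{r-1}]$ satisfies $p\circ j=1_{P_\mbar}$. Hence $H^*(P_\nbar)=\on{im}(p^*)\oplus\ker(j^*)$, and this splitting is compatible with both the Steenrod action and the $H^*(P_\mbar)$-module structure, with $\delta$ injective on $\ker(j^*)$. Choosing $x_{n_r}\in\ker(j^*)$ with $\delta(x_{n_r})=U$, both $\sq(x_{n_r})$ and $x_{n_r}(1+y)^{n_r+1}$ lie in $\ker(j^*)$ and have the same $\delta$-image $\sq(U)=(1+y)^{n_r+1}U$, so they are equal: the correction term is killed structurally rather than computed away. (In the $\nbar=(1,1,1)$ example, $j^*$ is exactly the map that detects $\pi^*(yx)$, since $j^*\pi^*$ is the identity, whereas $\rho^*$ annihilates it.) With that one addition --- which is also what makes the ordering hypothesis $n_1\le\cdots\le n_r$ do real work --- your outline becomes the paper's proof.
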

\begin{proof} The proof proceeds by induction on $r$, with the case $r=1$ being the well-known result
for $P^{n_1}$. Let $\mbar=(n_1,\ldots,n_{r-1})$, and assume the result known for $P_\mbar$. Since $P_\nbar\approx S((n_r+1)\xi_\mbar)$, there is a cofibration
\begin{equation}\label{cofibr}P_\nbar\mapright{p} P_\mbar \mapright{i} T((n_r+1)\xi_\mbar),\end{equation}
where $T(-)$ denotes the Thom space, and
$$p([x_1,\ldots,x_r])=[x_1,\ldots,x_{r-1}]$$
for $x_i\in S^{n_i}$. Hence there is an exact sequence, with  coefficients always in $\zt$,
\begin{equation}\label{LES}H^{*+1}(T((n_r+1)\xi_\mbar))\ \mapleft{\delta}\  H^*(P_\nbar)\ \mapleft{p^*}\ H^*(P_\mbar)\ \mapleft{i^*}\  H^*(T((n_r+1)\xi_\mbar)).\end{equation}
Since $n_{r-1}\le n_r$, there is a map $P_\mbar\mapright{j} P_\nbar$ defined by $$j([x_1,\ldots,x_{r-1}])=[x_1,\ldots,x_{r-1},x_{r-1}].$$
In the last component here, $S^{n_{r-1}}$ is identified as the obvious subspace of $S^{n_r}$. Since $p\circ j$ is the identity map of $P_{\mbar}$,
 (\ref{LES}) splits, yielding
\begin{equation}\label{split}H^*(P_\nbar)\approx H^*(P_\mbar)\oplus H^{*+1}(T((n_r+1)\xi_\mbar)),\end{equation}
and the splitting is as $A$-modules.

Let $x_{n_r}\in H^{n_r}(P_\nbar)$ correspond to the Thom class $U\in H^{n_r+1}(T(n_r+1)\xi_\mbar)$ under (\ref{split}).
Then $\sq(x_{n_r})$ corresponds to
$$\sq(U)=W((n_r+1)\xi_\mbar)U=(1+y)^{n_r+1}U.$$
Here $\sq$ and $W$ are the total Steenrod square and the total Stiefel-Whitney class, and we have used that
the projection $P_\mbar\mapright{p} P^{n_1}$ has $p^*(\xi_{n_1})=\xi_\mbar$.

By the Thom isomorphism, the second summand of the RHS of (\ref{split}) is $H^*(P_\mbar)\cdot x_{n_r}$.
Thus (\ref{split}) becomes
$$H^*(P_\nbar)\approx H^*(P_\mbar)\oplus H^*(P_\mbar)\cdot x_{n_r},$$
and the isomorphism is as rings, using the multiplication of $H^*(P_\mbar)$ on the RHS.
 Finally, \begin{equation}\label{sq}x_{n_r}^2=\sq^{n_r}x_{n_r}=\tbinom{n_r+1}{n_r}y^{n_r}x_{n_r},\end{equation}
which is 0 if $n_1<n_r$ or $n_r$ is odd. The case described in the last sentence of the theorem is also
immediate from (\ref{sq}). Thus the induction is extended.
\end{proof}

\begin{rmk} {\rm We can give an explicit formula for the map $\Sigma P_\nbar\to T((n_r+1)\xi_\mbar)$ which splits
the cofibration (\ref{cofibr}).  If $\xbar\in S^{n_1}\times\cdots\times S^{n_{r-2}}$, then our map sends
\begin{equation}\label{explicit}[t,\xbar,x_{n_{r-1}},x_{n_r}]\mapsto[\xbar,x_{n_{r-1}},tx_{n_{r-1}}+(1-t)x_{n_r}].
\end{equation}
Here $t\in[0,1]$, and $tx_{n_{r-1}}+(1-t)x_{n_r}$ takes place in $D^{n_r+1}$, the fiber of the disk bundle.
It is a path between two points of the sphere bundle, which are identified to the basepoint in the Thom space.}\end{rmk}

We also need the following result about $H^*(P_\nbar;\bq)$ and $H^*(P_\nbar;\bz/p)$ with $p$ an odd prime. To set notation, recall that $H^*(S^{n_1}\times\cdots\times S^{n_r};F)$
is the exterior algebra over $F$ on classes $x_{n_i}$, $1\le i\le r$, with $|x_{n_i}|=n_i$.
\begin{thm} \label{fldcoef} Let $F=\bq$ or $\bz/p$ with $p$ an odd prime. The homomorphism
$$H^*(P_\nbar;F)\mapright{\rho^*} H^*(S^{n_1}\times\cdots\times S^{n_r};F)$$
induced by the quotient map $\rho$ sends $H^*(P_\nbar;F)$ isomorphically to the $F$-span of all products
$x_{n_{i_1}}\cdots x_{n_{i_k}}$ such that $\sum\limits_{j=1}^k (n_{i_j}+1)$ is even.\end{thm}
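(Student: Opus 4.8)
The plan is to recognize $\rho$ as a double cover and invoke the standard transfer argument, which applies precisely because $2$ is invertible in $F$. The free involution $\xbar\sim-\xbar$ on $S^{n_1}\times\cdots\times S^{n_r}$ has as its nontrivial deck transformation the antipodal map $\tau(\xbar)=-\xbar$, and $\rho$ is the quotient by the resulting $\zt$-action. Since $\on{char}(F)\ne2$, the order $2$ of the deck group is a unit in $F$, so there is a transfer homomorphism $\on{tr}\colon H^*(S^{n_1}\times\cdots\times S^{n_r};F)\to H^*(P_\nbar;F)$ with $\on{tr}\circ\rho^*=2\cdot\on{id}$ and $\rho^*\circ\on{tr}=1+\tau^*$. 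The first relation shows $\rho^*$ is a split monomorphism. For the image: since $\rho\circ\tau=\rho$ we have $\tau^*\rho^*=\rho^*$, so the image of $\rho^*$ lies in the $\tau^*$-invariants; conversely, if $\alpha$ is $\tau^*$-invariant then $\rho^*(\tfrac12\on{tr}(\alpha))=\tfrac12(1+\tau^*)\alpha=\alpha$. Hence $\rho^*$ identifies $H^*(P_\nbar;F)$ isomorphically with the invariant subalgebra $H^*(S^{n_1}\times\cdots\times S^{n_r};F)^{\tau^*}$.

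It then remains to compute the action of $\tau^*$ on the exterior generators. Writing $\tau$ as the product of the antipodal maps $a_i\colon S^{n_i}\to S^{n_i}$ and using the Künneth isomorphism, $\tau^*=a_1^*\otimes\cdots\otimes a_r^*$. The antipodal map $a_i$ has degree $(-1)^{n_i+1}$, so $a_i^*x_{n_i}=(-1)^{n_i+1}x_{n_i}$, and therefore $\tau^*$ acts on a basis monomial $x_{n_{i_1}}\cdots x_{n_{i_k}}$ by the scalar $\prod_{j=1}^k(-1)^{n_{i_j}+1}=(-1)^{\sum_{j=1}^k(n_{i_j}+1)}$. Because $-1\ne1$ in $F$, such a monomial is $\tau^*$-invariant exactly when $\sum_{j=1}^k(n_{i_j}+1)$ is even. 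These invariant monomials form an $F$-basis of the invariant subalgebra, so $\rho^*$ carries $H^*(P_\nbar;F)$ isomorphically onto their $F$-span, which is the assertion of the theorem.

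There is no serious obstacle in this argument; all the ingredients (the transfer isomorphism for covers with invertible degree, the degree of the antipodal map, and the Künneth theorem for field coefficients) are standard. The only points requiring care are the sign bookkeeping in computing $\tau^*$ on products and, above all, the hypothesis $\on{char}(F)\ne2$: it is exactly the condition $-1\ne1$ that both powers the transfer (making $2$ invertible) and lets us separate the invariant monomials from the anti-invariant ones. This is precisely why the clean statement fails for $\zt$ coefficients, where Theorem \ref{cohthm} gives the genuinely different answer.
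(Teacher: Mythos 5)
Your proof is correct, but it takes a genuinely different route from the paper. You identify $\rho$ as a double cover and run the transfer argument: since $2$ is invertible in $F$, the relations $\on{tr}\circ\rho^*=2\cdot\on{id}$ and $\rho^*\circ\on{tr}=1+\tau^*$ identify $H^*(P_\nbar;F)$ with the $\tau^*$-invariant subalgebra of $H^*(S^{n_1}\times\cdots\times S^{n_r};F)$, and the eigenvalue computation $\tau^*(x_{n_{i_1}}\cdots x_{n_{i_k}})=(-1)^{\sum(n_{i_j}+1)}x_{n_{i_1}}\cdots x_{n_{i_k}}$ (degree of the antipodal map plus K\"unneth) finishes it. The paper instead uses the Serre spectral sequence with local coefficients for the fibration $S^{n_1}\times\cdots\times S^{n_r}\to P_\nbar\to RP^\infty$, showing that monomials with trivial $\pi_1$-action contribute $F$ in $E_2^{0,q}$ while those with nontrivial action contribute nothing, because $H^*(RP^\infty;F_\phi)=0$ when $2$ is invertible; the collapse then gives the result via the edge homomorphism, which is $\rho^*$. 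Your argument is more elementary and self-contained, and it hands you the ring structure of the image for free (it is literally the invariant subalgebra). What the paper's approach buys is reusable machinery: the very same spectral sequence, run with $\bz$ coefficients where $2$ is \emph{not} invertible and the transfer argument breaks down, is what powers the integral computation in Theorem \ref{intcoh} via Diagrams \ref{evev}--\ref{odod}, so setting it up here amortizes the effort. Both proofs hinge on the same point you emphasize: invertibility of $2$ in $F$, which is exactly what fails for $\zt$ coefficients in Theorem \ref{cohthm}.
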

\begin{proof} Let $S_\nbar=S^{n_1}\times\cdots\times S^{n_r}$. The map $S_\nbar\mapright{\rho} P_\nbar$ is a double cover, and so there is
a fibration $S_\nbar\to P_\nbar\to K(\zt,1)=RP^\infty$. This has a Serre spectral sequence with local coefficients
$$E_2^{p,q}=H^p(RP^\infty;{\mathcal H}^q(S_\nbar;F))\Rightarrow H^*(P_\nbar;F).$$
The action of the generator of $\pi_1(RP^\infty)$ on $x_{n_{i_1}}\cdots x_{n_{i_k}}$ is by multiplication by $\prod(-1)^{n_{i_j}+1}$.
Classes in ${\mathcal H}^q(S_\nbar;F)$ with trivial action will yield $F$ in $E_2^{0,q}$ and nothing else, while those with nontrivial action $\phi$
yield nothing at all in $E_2$. This latter can be seen by noting, for example from \cite[p.100]{Green}, that $H^*(RP^\infty;\bz_\phi)$
is $\zt$ for odd positive values of $*$, and 0 for even values of $*$, including $*=0$. By the Universal Coefficient Theorem, if $\bz$ is replaced by
$F$, all groups become 0. Indeed, it is the cohomology of a cochain complex with $F$ in each nonnegative grading and $\delta=2:C_{2i}\to C_{2i+1}$,
$i\ge0$.
Thus the spectral sequence collapses, having as its only nonzero summands $F$ generated by $x_{n_{i_1}}\cdots x_{n_{i_k}}$
in $E_2^{0,q}$ with $q=\sum n_{i_j}$ whenever $\sum (n_{i_j}+1)$ is even.
\end{proof}

Our next result is a splitting of $\Sigma P_\nbar$ as a wedge of desuspensions of stunted projective spaces. Here $P_n^k=RP^k/RP^{n-1}$. This splitting
has the potential to be used in studying $\spa(P_\nbar)$ at the end of the next section.
It is also useful in analyzing $K^*(P_\nbar)$ in the proof of Theorem \ref{Kthm}.

\begin{thm}\label{splitthm} Let $\nbar=(n_1,\ldots,n_r)$ with $n_1\le n_i$ for all $i$. There is a homotopy equivalence
\begin{equation}\label{splitsig}\Sigma P_\nbar\simeq \bigvee_{\ubar\subseteq(n_2,\ldots,n_r)}\Sigma^{1-\ell(\ubar)}P_{|\ubar|+\ell(\ubar)}^{n_1+|\ubar|+\ell(\ubar)}.\end{equation}
Here, if $\ubar=(u_1,\ldots,u_s)$, then $|\ubar|:=u_1+\cdots+u_s$ and $\ell(\ubar):=s$.
\end{thm}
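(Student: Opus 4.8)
The plan is to prove (\ref{splitsig}) as a splitting of \emph{suspension spectra}: the exponents $\Sigma^{1-\ell(\ubar)}$ are negative once $\ell(\ubar)\ge2$, and the summands will be produced as stable retracts rather than honest subspaces, so the natural home for the argument is the stable category. Since $P_\nbar$ and the right-hand side depend only on $n_1=\min(n_i)$ and on the multiset $\{n_2,\dots,n_r\}$, I first reorder so that $n_1\le n_2\le\cdots\le n_r$. I would then prove two statements together by induction on $r$: statement (A), the equivalence (\ref{splitsig}) itself; and statement (B), that for every $k\ge1$ there is a stable equivalence
\begin{equation*}T(k\xi_\nbar)\simeq\bigvee_{\ubar\subseteq(n_2,\dots,n_r)}\Sigma^{-\ell(\ubar)}P_{|\ubar|+\ell(\ubar)+k}^{n_1+|\ubar|+\ell(\ubar)+k}.\end{equation*}
The base case $r=1$ is immediate: (A) reads $\Sigma P^{n_1}=\Sigma P_0^{n_1}$, and (B) is the standard identification $T(k\xi_{n_1})=P_k^{n_1+k}$ of the Thom space of $k$ copies of the canonical line bundle over $P^{n_1}$.

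The engine of both inductive steps is a single cofiber sequence. Writing $\mbar=(n_1,\dots,n_{r-1})$, $V=(n_r+1)\xi_\mbar$ (so $S(V)=P_\nbar$ with projection $p$), and $W=k\xi_\mbar$, the Thom-space form of the Gysin sequence of the sphere bundle $p$ gives a cofiber sequence
\begin{equation*}T(p^*W)\mapright{T(p)}T(W)\longrightarrow T(W\oplus V),\end{equation*}
i.e. $T(k\xi_\nbar)\mapright{T(p)}T(k\xi_\mbar)\to T((k+n_r+1)\xi_\mbar)$, using $p^*\xi_\mbar=\xi_\nbar$ (both pull back $\xi_{n_1}$ along the projection to $P^{n_1}$) and $W\oplus V=(k+n_r+1)\xi_\mbar$; on cohomology this is precisely the Gysin/Thom exact sequence whose connecting map is cup-product with the Euler class. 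The reduced $k=0$ case recovers the split cofibration (\ref{cofibr})--(\ref{split}), whose splitting map is the explicit one in the Remark following Theorem \ref{cohthm}. The essential point is that the sequence is split for every $k$: the section $j$ of $p$ satisfies $j^*\xi_\nbar=j^*p^*\xi_\mbar=\xi_\mbar$, so $j$ is covered by a bundle map $k\xi_\mbar\to k\xi_\nbar$ inducing $T(j)\colon T(k\xi_\mbar)\to T(k\xi_\nbar)$ with $T(p)\circ T(j)=T(p\circ j)=\mathrm{id}$. Thus $T(k\xi_\mbar)$ is a stable retract of $T(k\xi_\nbar)$, and splitting the triangle yields
\begin{equation*}T(k\xi_\nbar)\simeq T(k\xi_\mbar)\vee\Sigma^{-1}T((k+n_r+1)\xi_\mbar),\end{equation*}
together with the reduced version $\Sigma P_\nbar\simeq\Sigma P_\mbar\vee T((n_r+1)\xi_\mbar)$ for (A).

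With these recursions the induction is pure bookkeeping. For (B) I would feed the inductive hypothesis for $\mbar$ into both summands on the right --- once with parameter $k$ and once with parameter $k+n_r+1$ --- and reindex the subsets $\ubar\subseteq(n_2,\dots,n_r)$ by whether or not they contain $n_r$: those avoiding $n_r$ match the first wedge, while those of the form $\ubar=(\ubar',n_r)$ with $\ubar'\subseteq(n_2,\dots,n_{r-1})$ match the second, where the extra $\Sigma^{-1}$ and the shift $k\mapsto k+n_r+1$ turn $\Sigma^{-\ell(\ubar')}$ and $|\ubar'|+\ell(\ubar')+k$ into $\Sigma^{-\ell(\ubar)}$ and $|\ubar|+\ell(\ubar)+k$ (using $\ell(\ubar)=\ell(\ubar')+1$, $|\ubar|=|\ubar'|+n_r$). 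Statement (A) follows in the same way from its recursion and from (B) for $\mbar$ applied with $k=n_r+1$. I expect the main obstacle to lie not in this combinatorics but in the first step: establishing the split Gysin cofiber sequence of Thom spaces at the point-set level --- identifying $T(W\oplus V)$ with the mapping cone of $T(p)$ and verifying that $T(j)$ is a genuine section --- and keeping the whole argument in the stable category, where a one-sided inverse is enough to force the wedge splitting.
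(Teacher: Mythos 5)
Your stable argument is sound as far as it goes: the Thom-space cofibration $T(k\xi_\nbar)\to T(k\xi_\mbar)\to T((k+n_r+1)\xi_\mbar)$, the section $T(j)$ induced by $j^*\xi_\nbar=\xi_\mbar$, the resulting stable splitting $T(k\xi_\nbar)\simeq T(k\xi_\mbar)\vee\Sigma^{-1}T((k+n_r+1)\xi_\mbar)$, and the reindexing bookkeeping are all correct, and together they prove the suspension-spectrum version of (\ref{splitsig}). But the theorem claims more than that: it asserts a homotopy equivalence of \emph{spaces}. The right-hand side of (\ref{splitsig}) is a wedge of honest spaces (the paper notes that the desuspensions exist for dimensional reasons), and the space-level statement is what gets used elsewhere --- for instance to produce an actual map $\Sigma P_{(n_1,n_i)}\to S^{n_i+1}$ which is then desuspended to a map $P_{(n_1,n_i)}\to S^{n_i}$ via \cite[1.11]{Milg}. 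Your construction cannot deliver this once $r\ge3$: every splitting of a triangle beyond the first introduces a $\Sigma^{-1}$ that has no meaning for spaces. Unstably, your recursion only yields $\Sigma T(k\xi_\nbar)\simeq T((k+n_r+1)\xi_\mbar)\vee\Sigma T(k\xi_\mbar)$, so each inductive stage costs one suspension, and what you end up splitting is a high suspension of $P_\nbar$ --- which is again precisely the stable statement.

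Nor can the stable equivalence be desuspended ``for free'' at the end. The component of your stable map into the summand indexed by $\ubar$ has source $\Sigma P_\nbar$, of dimension $|\nbar|+1$, and target of connectivity $|\ubar|$; Freudenthal realizes a stable map by an honest map only when (roughly) $|\nbar|+1\le 2|\ubar|+1$, and this fails whenever $|\ubar|$ is small relative to $|\nbar|$ --- already for $\ubar=(n_2)$ with $\nbar=(1,1,1)$. This desuspension problem, not the split Gysin sequence (which is standard), is where the real content of the theorem lies, and it is exactly what the paper's proof is organized to overcome: the map into the $\ubar$-summand is constructed so as to factor through the projection $T((n_r+1)\xi_\mbar)\to T((n_r+1)\xi_{\ubar'})$, where $\ubar'=\ubar\cup\{n_1\}-\{n_r\}$, whose source has dimension only $n_1+|\ubar|+1$; the explicit coordinate formula (\ref{suspended}) is then shown to desuspend $\ell(\ubar)-1$ times because $n_1+|\ubar|+1<2(|\ubar|+1)$ (this is where the hypothesis $n_1\le u_i$ enters), using \cite[1.11]{Milg}. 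The resulting space-level maps are combined using the co-H structure of $\Sigma P_\nbar$, and the Whitehead theorem is applied, with Theorem \ref{fldcoef} supplying the needed $\bq$- and $\bz/p$-cohomology isomorphisms. To repair your proof you would have to add an analogous unstable-realization step; as written, it establishes a genuinely weaker statement than Theorem \ref{splitthm}.
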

Here we use subset notation for ordered subsets such as $\nbar$ and $\ubar$, which may have repeated entries.
Note that there are $2^{r-1}$ wedge summands, each with $n_1+1$ cells, corresponding nicely to Theorem \ref{cohthm}, as does the $A$-action.
The desuspensions on the RHS of (\ref{splitsig}) exist for dimensional reasons.
\begin{proof} The proof is by induction on $r$. Let $n_1\le\ldots\le n_r$, and let $\mbar=(n_1,\ldots,n_{r-1})$, as in the proof of
\ref{cohthm}. Because of the map $P_\mbar\mapright{j} P_\nbar$ such that $p\circ j=1_{P_\mbar}$, $i$ is null-homotopic in (\ref{cofibr}),
and so there is a splitting
\begin{equation}\label{cofsplit}\Sigma P_\nbar\ \simeq\ \Sigma P_\mbar\ \vee\  T((n_r+1)\xi_\mbar).\end{equation}
For each summand of the RHS of (\ref{splitsig}) such that $n_r\in\ubar$, we will construct a map
\begin{equation}\label{mapf}T((n_r+1)\xi_\mbar)\longrightarrow\Sigma^{1-\ell(\ubar)}P_{|\ubar|+\ell(\ubar)}^{n_1+|\ubar|+\ell(\ubar)}
\end{equation}
such that, when preceded by the projection $\Sigma P_\nbar\to T((n_r+1)\xi_\mbar)$, the composite has cohomology homomorphism
injecting onto $\sigma(\zt[y]/y^{n_1+1})\prod\limits_{j\in\ubar}x_j\subset H^*(\Sigma P_\nbar;\zt)$. Using these and (\ref{cofsplit}) and the induction hypothesis applied to
$P_\mbar$, we obtain  maps from $\Sigma P_\nbar$ into all spaces in the wedge in (\ref{splitsig}), and hence, using the co-H-structure
of $\Sigma P_\nbar$, we obtain the desired map in (\ref{splitsig}), which induces an isomorphism in $\zt$-cohomology.

To construct (\ref{mapf}), we first construct a map \begin{equation}\label{umap}T((n_r+1)\xi_{\ubar'})\mapright{f_\ubar}\Sigma^{1-\ell(\ubar)}P_{|\ubar|+\ell(\ubar)}^{n_1+|\ubar|+\ell(\ubar)},
\end{equation}
where $\ubar'=\ubar\cup\{n_1\}-\{n_r\}$,
and then precede it by the projection $T((n_r+1)\xi_\mbar)\to T((n_r+1)\xi_{\ubar'})$.
We obtain (\ref{umap}) by constructing a map of the $(\ell(\ubar)-1)$-fold suspensions, and then noting that it desuspends to the
desired map. Here we use that a map $\Sigma^t X\to\Sigma^t Y$ is a $t$-fold suspension if the dimension of $X$ is less than twice the
dimension of the bottom cell of $Y$. (See, e.g., \cite[1.11]{Milg})

Let $\ubar=(u_1,\ldots,u_s)$ with $u_s=n_r$. The suspended version of (\ref{umap}) is a map
\begin{equation}\label{suspended}\Sigma^{s-1}T((u_s+1)\xi_{(n_1,u_1,\ldots,u_{s-1})})\to P_{u_1+\cdots+u_s+s}^{u_1+\cdots+u_s+s+n_1}\end{equation}
defined by
$$[t_1,\ldots,t_{s-1},x,y,z_1,\ldots,z_{s-1}]\mapsto[\tfrac MLx,\tfrac MLt_1z_1\ldots,\tfrac MLt_{s-1}z_{s-1},\sqrt{1-M^2}y],$$
where $t_i\in[-1,1]$, $x\in D^{u_s+1}$, $y\in S^{n_1}\subset\br^{n_1+1}$, and $z_i\in S^{u_i}\subset\br^{u_i+1}$. Also, $M=\max(|t_1|,\ldots,
|t_{s-1}|,\|x\|)$, and $L=\sqrt{t_1^2+\cdots+t_{s-1}^2+\|x\|^2}$. Since $\|z_i\|=1=\|y\|$, one easily checks that the image point
has norm 1 in $\br^{u_s+1}\times\br^{u_1+1}\times\cdots\times\br^{u_{s-1}+1}\times\br^{n_1+1}$. The map clearly respects the antipodal
action, and if any $|t_i|=1$ or $\|x\|=1$, then the image point is in the subspace $P^{u_1+\cdots+u_{s}+s-1}$, which is collapsed
in the target space. Thus the map is well-defined. One readily checks that it sends the interior of the top cell bijectively,
and so the top cohomology class maps across. The map is natural with respect to decreasing values of $n_1$, and hence induces an injection
in mod 2 cohomology, as claimed above.

Let $F=\bq$ or $\bz/p$ with $p$ an odd prime. Since $H^*(P_n^k;F)$ has $F$ in $*=n$ if $n$ is even, and in $*=k$ if $k$ is odd, and nothing
else, one readily checks, using Theorem \ref{fldcoef}, that the two spaces in (\ref{splitsig}) have isomorphic $F$-cohomology groups.
In (\ref{umap}), if $n_1+|\ubar|+\ell(\ubar)$ is odd, then, by the above observation about the top cell of the map just constructed,
the $F$-cohomology homomorphism induced by (\ref{umap}) is nontrivial in the top dimension. If $|\ubar|+\ell(\ubar)$ is even, then
the $F$-cohomology homomorphism induced by (\ref{umap}) is nontrivial in dimension $|\ubar|+1$ by consideration of the above construction
when $n_1=0$. In our inductive construction of the map from the LHS of (\ref{splitsig}) to the RHS, all summands of the map
ultimately come from (\ref{umap}). Thus the $F$-cohomology homomorphism induced by (\ref{splitsig}) is bijective. Since the map
$$\Sigma P_\nbar\to \bigvee_{\ubar\subseteq(n_2,\ldots,n_r)}\Sigma^{1-\ell(\ubar)}P_{|\ubar|+\ell(\ubar)}^{n_1+|\ubar|+\ell(\ubar)}$$
of simply-connected
spaces induces an isomorphism in $\bq$- and $\bz/p$-cohomology for all primes $p$, it is a homotopy equivalence.
\end{proof}

Next we determine the integral cohomology ring $H^*(P_\nbar;\bz)$.
\begin{thm}\label{intcoh} Let $\nbar=(n_1,\ldots,n_r)$ with $n_1\le n_i$ for all $i$. Let $n_1=2m_1+\eps$ with $\eps\in\{0,1\}$.
Let $E=\{i>1:\ n_i\text{ even}\}$. There are classes $z$ with $|z|=2$ and $x_{n_i}$ with $|x_{n_i}|=n_i$ and an isomorphism
of graded rings
$$H^*(P_\nbar;\bz)\approx(A\oplus B\oplus C)\otimes\Lambda[x_{n_i}:i>1,\ n_i\text{ odd}]$$
with
\begin{eqnarray*}A&=&(\bz[z]/(2z,z^{m_1+1}))\otimes\bigl\langle\prod_{j\in S}x_{n_j}:S\subset E,\ |S|\text{ even} \bigr\rangle,\\
B&=&\bigl\langle2x_{n_1}\prod_{j\in S}x_{n_j}:S\subset E,\ |S|\not\equiv\eps\ (2)\bigr\rangle,\\
C&=&(\zt[z]/(z^{m_1+\eps}))\otimes\bigl\langle p_S:S\subset E,\ |S|\text{ odd}\bigr\rangle,
\end{eqnarray*}
where $|p_S|=1+\sum\limits_{j\in S}n_j$ and if $i,j\in E-S$, then $x_{n_i}x_{n_j}p_S=p_{S\cup\{i,j\}}$.
The only nontrivial products are the indicated products by $x_{n_i}$'s or by $z$. \end{thm}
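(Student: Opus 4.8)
The plan is to run the same Gysin/cofibration induction used in Theorem \ref{cohthm}, but now tracking integral information, and to pin down the extension problems by comparing with the known mod-$2$ and field-coefficient answers (Theorems \ref{cohthm} and \ref{fldcoef}). As in the proof of \ref{cohthm}, set $\mbar=(n_1,\ldots,n_{r-1})$ and use the cofibration (\ref{cofibr}) together with the section $j$, which gives a split short exact sequence of $H^*(P_\mbar;\bz)$-modules
\begin{equation}\label{zsplit}H^*(P_\nbar;\bz)\approx H^*(P_\mbar;\bz)\oplus \widetilde H^{*}(T((n_r+1)\xi_\mbar);\bz).\end{equation}
By the Thom isomorphism the second summand is a free rank-one $H^*(P_\mbar;\bz)$-module on a Thom class $U$, except that $\xi_\mbar$ is nonorientable, so the relevant Thom isomorphism is with twisted coefficients unless $n_r$ is odd. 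First I would treat the two cases $n_r$ odd and $n_r$ even separately: when $n_r$ is odd, $(n_r+1)\xi_\mbar$ is orientable, $U$ lifts to an integral class $x_{n_r}$ of infinite order squaring to $0$, producing the exterior generators recorded in $\Lambda[x_{n_i}:i>1,\ n_i\text{ odd}]$; when $n_r$ is even, the twisting forces $2$-torsion and interacts with the existing torsion in $z$, which is what feeds the summands $A$, $B$, $C$.

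Next I would identify the base case and the role of $z$. For $r=1$ the theorem reduces to the classical computation of $H^*(P^{n_1};\bz)$: a polynomial generator $z$ of degree $2$ with $2z=0$ and $z^{m_1+1}=0$, together (when $n_1$ is even) with a top class of order $2$; this matches $A\oplus B\oplus C$ with $E=\emptyset$, so $A=\bz[z]/(2z,z^{m_1+1})$, $B=\langle 2x_{n_1}\rangle$ when $\eps=0$, and $C=\zt[z]/(z^{m_1})\cdot p_\emptyset$ accounting for the torsion tower in the even case. The class $z$ is $\b(y)$, the integral Bockstein of the degree-one mod-$2$ class, and I would use naturality of $z$ under the projections $P_\nbar\to P^{n_1}$ to propagate it. The inductive step then multiplies the current answer by a new exterior class (odd $n_r$) or adjoins a new even class $x_{n_r}$; in the latter case the decomposition into $S\subset E$ with $|S|$ even, $|S|\not\equiv\eps$, and $|S|$ odd is exactly the bookkeeping of which monomials $\prod_{j\in S}x_{n_j}$ carry a free $\bz[z]/(2z,z^{m_1+1})$-action (these are the orientable combinations, landing in $A$), which carry only a nonzero integral multiple $2x_{n_1}\prod x_{n_j}$ surviving from the reduction (landing in $B$), and which assemble into the genuinely twisted classes $p_S$ of order $2$ (landing in $C$). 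The parity condition $\sum(n_{i_j}+1)$ even from Theorem \ref{fldcoef} is the rational/odd-primary shadow of this same orientability distinction and I would invoke it to fix the free part and to check ranks.

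The main obstacle will be resolving the \emph{extension problems}: knowing the associated graded of (\ref{zsplit}) at each stage does not immediately determine whether a class has order $2$ or infinite order, nor whether $x_{n_i}^2$ and the products $x_{n_i}x_{n_j}p_S$ vanish or hit $p_{S\cup\{i,j\}}$ as asserted. The strategy for this is a three-way coefficient comparison: reduce mod $2$ and match against Theorem \ref{cohthm} (this detects which classes are $2$-torsion and recovers the relation $x_{n_i}^2=y^{n_1}x_{n_i}$ in the even-equal case, forcing the corresponding integral products), map to rational and $\bz/p$ coefficients and match against Theorem \ref{fldcoef} (this detects the infinite-order, free-over-$\bz[z]/(2z,\dots)$ part and forbids extra torsion in those degrees), and use the integral Bockstein $\b$ to tie $z$ and the torsion generators together. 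The identity $x_{n_i}x_{n_j}p_S=p_{S\cup\{i,j\}}$ I would verify by observing that both sides are $2$-torsion classes reducing to the same mod-$2$ monomial $\prod_{S\cup\{i,j\}}x_{n_j}\cdot(\text{power of }y)$ under Theorem \ref{cohthm}, and that the mod-$2$ reduction is injective on the torsion subgroup in the relevant degree, so the integral product is determined. Once every product is pinned down on generators and the only surviving multiplications are by $x_{n_i}$'s and by $z$, the ring structure is forced and the induction closes.
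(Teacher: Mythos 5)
Your splitting of the integral long exact sequence of (\ref{cofibr}) is correct (the section $j$ works with any coefficients), and the orientable half of your induction ($n_r$ odd) is fine. The genuine gap is the nonorientable half, which is where essentially all the content of Theorem \ref{intcoh} lives. When $n_r$ is even, the Thom isomorphism identifies $\widetilde H^{*+1}(T((n_r+1)\xi_\mbar);\bz)$ with $H^{*-n_r}(P_\mbar;\bz_w)$, the cohomology of $P_\mbar$ with coefficients in the local system twisted by $w_1(\xi_\mbar)$. Your inductive hypothesis records only the \emph{untwisted} integral cohomology, so the induction cannot close as stated, and nowhere in the proposal is this twisted cohomology computed: the assertion that the twisting ``forces 2-torsion and interacts with the existing torsion in $z$, which is what feeds the summands $A$, $B$, $C$'' states the answer rather than deriving it. The partition of monomials by parity of $|S|$, the free classes $2x_{n_1}\prod_{j\in S} x_{n_j}$, and the truncated towers $(\zt[z]/(z^{m_1+\eps}))\cdot p_S$ are exactly the output of that missing computation. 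To repair this you would need a double induction computing $H^*(P_\mbar;\bz)$ and $H^*(P_\mbar;\bz_w)$ simultaneously (the cofibration and section do extend to twisted coefficients), or else let your ``three-way coefficient comparison'' carry the entire additive computation; but in that case the Bockstein step must become an actual argument, because your appeal to ``mod-2 reduction is injective on the torsion subgroup'' presupposes that all torsion in $H^*(P_\nbar;\bz)$ has order exactly $2$ --- which is part of what is being proved, and requires computing the homology of $\sq^1$ acting on the answer of Theorem \ref{cohthm} and matching it against the ranks supplied by Theorem \ref{fldcoef}.

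Two further points. Your base case is misstated: for $r=1$ we have $E=\emptyset$, so $C=0$ always, and $B=\langle 2x_{n_1}\rangle$ occurs exactly when $\eps=1$ ($n_1$ odd), not $\eps=0$; when $n_1$ is even the entire answer is $A=\bz[z]/(2z,z^{m_1+1})$, whose top class $z^{m_1}$ already \emph{is} the order-2 top class, not a separate generator or a tower of $p_\emptyset$'s. Finally, for contrast: the paper's proof sidesteps twisted Thom isomorphisms altogether by running the Serre spectral sequence, with local coefficients, of the double cover $S^{n_1}\times\cdots\times S^{n_r}\to P_\nbar\to RP^\infty$, exactly as in the proof of Theorem \ref{fldcoef}. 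There each monomial $\prod_{i\in T}x_{n_i}$ contributes a row equal to $H^*(RP^\infty)$ with trivial or twisted $\bz$ coefficients --- both of which are standard --- and the only differentials are the Euler-class differentials displayed in Diagrams \ref{evev}--\ref{odod}, after which $A$, $B$, $C$ are read off. The twisted-coefficient difficulty you defer is precisely what that spectral sequence is built to organize, so if you want a proof along your lines, the cleanest fix is to import that computation rather than reprove it inside the induction.
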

Here $|S|$ denotes the cardinality of the set $S$, and $\langle-\rangle$ denotes the span of a set of elements.
Note that if $n_i$ is even, $x_{n_i}$ is not in $H^*(P_\nbar;\bz)$, but is involved in various product expressions.
The reason for the factor 2 in the terms in $B$ is to denote their image under $H^*(P_\nbar)\to H^*(S^{n_1}\times\cdots\times S^{n_r})$.
\begin{proof} As in the proof of \ref{fldcoef}, we use the Serre spectral sequence of $S^{n_1}\times\cdots\times S^{n_r}\to P_\nbar\to RP^\infty$.
The spectral sequence is a sum of two forms, and these vary with the parity of $n_1$.  For every product $\prod\limits_{i\in T} x_{n_i}$ with $T$ a subset of $\{2,\ldots,r\}$, there is a portion of the spectral sequence as in one
of the four diagrams below. In these diagrams, a dot represents $\zt$. Once these portions of the spectral sequence are noted,
the conclusion of the theorem is mostly just bookkeeping.
\end{proof}
\medskip
\begin{minipage}{6.5in}
\begin{diag}\label{evev}{$n_1=2m_1$, $|T\cap E|$ even, yielding $A$ in \ref{intcoh}}
\begin{center}
\begin{picture}(430,90)(100,0)
\def\mp{\multiput}
\def\elt{\circle*{3}}
\put(200,0){\line(0,1){90}}
\put(197,27){$\bz$}
\mp(250,30)(50,0){2}{\elt}
\mp(375,30)(50,0){2}{\elt}
\mp(225,80)(50,0){2}{\elt}
\mp(228,79)(50,0){2}{\vector(3,-1){144}}
\put(224,0){$1$}
\put(248,0){$2$}
\put(273,0){$3$}
\put(298,0){$4$}
\put(360,0){$2m_1+2$}
\put(330,30){$\ldots$}
\put(150,29){$\prod\limits_{i\in T}x_{n_i}$}
\put(140,79){$x_{n_1}\prod\limits_{i\in T}x_{n_i}$}
\end{picture}
\end{center}
\end{diag}
\end{minipage}
\bigskip

\bigskip
\begin{minipage}{6.5in}
\begin{diag}\label{evod}{$n_1=2m_1$, $|T\cap E|$ odd, yielding $B$ and $C$ in \ref{intcoh}}
\begin{center}
\begin{picture}(430,90)(100,0)
\def\mp{\multiput}
\def\elt{\circle*{3}}
\put(200,0){\line(0,1){90}}
\put(197,77){$\bz$}
\mp(225,30)(50,0){2}{\elt}
\mp(350,30)(50,0){2}{\elt}
\mp(250,80)(50,0){1}{\elt}
\mp(203,79)(50,0){2}{\vector(3,-1){144}}
\put(224,0){$1$}
\put(248,0){$2$}
\put(273,0){$3$}
\put(335,0){$2m_1+1$}
\put(305,30){$\ldots$}
\put(150,29){$\prod\limits_{i\in T}x_{n_i}$}
\put(140,79){$x_{n_1}\prod\limits_{i\in T}x_{n_i}$}
\end{picture}
\end{center}
\end{diag}
\end{minipage}
\bigskip

\bigskip
\begin{minipage}{6.5in}
\begin{diag}\label{odev}{$n_1=2m_1+1$, $|T\cap E|$ even, yielding $A$ and $B$ in \ref{intcoh}}
\begin{center}
\begin{picture}(430,90)(100,0)
\def\mp{\multiput}
\def\elt{\circle*{3}}
\put(200,0){\line(0,1){90}}
\mp(197,77)(0,-50){2}{$\bz$}
\mp(250,30)(50,0){2}{\elt}
\mp(350,30)(50,0){2}{\elt}
\mp(250,80)(50,0){1}{\elt}
\mp(203,79)(50,0){2}{\vector(3,-1){144}}
\put(224,0){$1$}
\put(248,0){$2$}
\put(273,0){$3$}
\put(298,0){$4$}
\put(335,0){$2m_1+2$}
\put(315,30){$\ldots$}
\put(150,29){$\prod\limits_{i\in T}x_{n_i}$}
\put(140,79){$x_{n_1}\prod\limits_{i\in T}x_{n_i}$}
\end{picture}
\end{center}
\end{diag}
\end{minipage}
\bigskip

\bigskip
\begin{minipage}{6.5in}
\begin{diag}\label{odod}{$n_1=2m_1+1$, $|T\cap E|$ odd, yielding $C$ in \ref{intcoh}}
\begin{center}
\begin{picture}(430,90)(100,0)
\def\mp{\multiput}
\def\elt{\circle*{3}}
\put(200,0){\line(0,1){90}}
\mp(225,30)(50,0){2}{\elt}
\mp(375,30)(50,0){2}{\elt}
\mp(225,80)(50,0){2}{\elt}
\mp(228,79)(50,0){2}{\vector(3,-1){144}}
\put(224,0){$1$}
\put(248,0){$2$}
\put(273,0){$3$}
\put(360,0){$2m_1+3$}
\put(318,30){$\ldots$}
\put(150,29){$\prod\limits_{i\in T}x_{n_i}$}
\put(140,79){$x_{n_1}\prod\limits_{i\in T}x_{n_i}$}
\end{picture}
\end{center}
\end{diag}
\end{minipage}
\medskip

Theorem \ref{intcoh} suggests the possibility that the $S^{n_i}$ with $n_i$ odd might be product factors of $P_\nbar$.
The following result shows the limited extent to which this is true. Here and throughout, $\nu(-)$ denotes the exponent of 2 in an integer,
and $\phi(n)$ is the number of positive integers $\le n$ which are congruent to $0$, $1$, $2$, or $4$ mod $8$.
\begin{thm}\label{he} Let $n_1\le n_i$ for all $i$. Let $T=\{i>1:\ \nu(n_i+1)\ge\phi(n_1)\}$.
\begin{enumerate}
\item If $\mbar$ denotes the subtuple of $\nbar$ whose subscripts are not in $T$, then there is a homeomorphism $P_\nbar\approx P_\mbar\times\prod\limits_{i\in T}S^{n_i}$.
\item If $i\not\in T$ and $\mbar$ is obtained from $\nbar$ by omitting $n_i$, then there does not exist a homotopy equivalence $P_\nbar\simeq P_\mbar\times  S^{n_i}$.
    \end{enumerate}
    \end{thm}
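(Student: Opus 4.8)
For part (1), I would iterate the sphere-bundle description (\ref{sph}). Since the definition of $P_\nbar$ is symmetric in the $S^{n_i}$, I can permute factors so that the indices of $T$ come last and then peel them off one at a time. If $\nbar'$ is a subtuple still having $n_1$ as its smallest entry and $t\in T$, then $P_{(\nbar',n_t)}=S((n_t+1)\xi_{\nbar'})$, and $\xi_{\nbar'}$ is the pullback of $\xi_{n_1}$ under the projection $P_{\nbar'}\to P^{n_1}$. So it suffices that $(n_t+1)\xi_{n_1}$ be trivial over $P^{n_1}$. By Adams, $\widetilde{KO}(P^{n_1})$ is cyclic of order $2^{\phi(n_1)}$ on $\xi_{n_1}-1$; since $t\in T$ gives $\nu(n_t+1)\ge\phi(n_1)$, i.e. $2^{\phi(n_1)}\mid(n_t+1)$, the bundle $(n_t+1)\xi_{n_1}$ is stably trivial, and as its rank $n_t+1>n_1=\dim P^{n_1}$ it is genuinely trivial. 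Pulling back and using (\ref{sph}) gives $P_{(\nbar',n_t)}\approx P_{\nbar'}\times S^{n_t}$; iterating yields the homeomorphism of (1).

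For part (2) I would first record the tangent bundle of $P_\nbar$. The $r$ outward radial fields of the sphere factors give an equivariantly trivial normal bundle of $S^{n_1}\times\cdots\times S^{n_r}$ in $\R^{|\nbar|+r}$, on which $\Z/2$ acts by $-1$; passing to the quotient, the ambient trivial bundle becomes $(|\nbar|+r)\xi_\nbar$ and the normal bundle becomes $\underline{\R}^{\,r}$, so $TP_\nbar\oplus\underline{\R}^{\,r}\cong(|\nbar|+r)\xi_\nbar$. Hence the reduced class of $TP_\nbar$ in $\widetilde{KO}(P_\nbar)$ is $(|\nbar|+r)(\xi_\nbar-1)$ and $W(P_\nbar)=(1+y)^{|\nbar|+r}$. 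The same formula for $Q:=P_\mbar\times S^{n_i}$ (using that $S^{n_i}$ is stably parallelizable) gives reduced tangent class $(|\mbar|+r-1)(\eta-1)$, where $\eta$ is the line bundle pulled back from $\xi_\mbar$. A comparison of total Stiefel--Whitney classes alone is too weak here: it only rules out homotopy equivalence when $2^{\lceil\log_2(n_1+1)\rceil}\nmid(n_i+1)$, which is strictly weaker than $i\notin T$ once $n_1\ge9$.

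So suppose $h\colon P_\nbar\to Q$ is a homotopy equivalence. Since $i\notin T$ forces $n_i\ge2$, both spaces have $H^1(-;\zt)=\zt$, so $h^*$ carries generator to generator and hence $h^*\eta\cong\xi_\nbar$ by the $w_1$-classification of real line bundles. Because $P_\nbar$ and $Q$ are closed manifolds, their Spivak normal fibrations correspond under $h$; as the stable normal and tangent bundles are negatives, the $J$-homomorphism image $J(\tau)$ of the stable tangent bundle is preserved as well. Combining this with the two tangent computations and $h^*\eta\cong\xi_\nbar$ gives $(|\nbar|+r)J(\xi_\nbar-1)=(|\mbar|+r-1)J(\xi_\nbar-1)$ in $J(P_\nbar)$, that is $(n_i+1)J(\xi_\nbar-1)=0$. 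Finally I restrict along the diagonal $j\colon P^{n_1}\to P_\nbar$, $[x]\mapsto[x,\ldots,x]$, which satisfies $j^*\xi_\nbar=\xi_{n_1}$; naturality of $J$ yields $(n_i+1)J(\xi_{n_1}-1)=0$ in $J(P^{n_1})$.

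The proof then closes with the classical input that the fiber-homotopy order of $\xi_{n_1}$, i.e. the order of $J(\xi_{n_1}-1)$, is exactly $2^{\phi(n_1)}$ (equivalently, $k\xi_{n_1}$ is fiber-homotopy trivial iff $2^{\phi(n_1)}\mid k$; this is the sectioning question referred to in the introduction). This forces $2^{\phi(n_1)}\mid(n_i+1)$, i.e. $i\in T$, contradicting the hypothesis. The main obstacle is exactly this last step: the genuine content of (2) is the determination of the $J$-order as $2^{\phi(n_1)}$, beyond the Stiefel--Whitney bound, together with the structural facts that a bare homotopy equivalence still preserves the stable normal fibration (Spivak) and the line bundle $\xi_\nbar$---the latter relying on $H^1$ being one-dimensional, which is what confines the nontrivial case to $n_i\ge2$.
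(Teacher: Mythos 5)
Your proposal is essentially correct, and on both parts it takes a genuinely different route from the paper. For part (1), the paper constructs the homeomorphism directly: for $i\in T$ the hypothesis $\nu(n_i+1)\ge\phi(n_1)$ provides a Clifford-module action of $S^{n_1}$ on $S^{n_i}$ (via \cite{ABS}), and the self-map of $S^{n_1}\times\cdots\times S^{n_r}$ replacing $x_i$ by $x_1\cdot x_i$ for $i\in T$ carries the diagonal antipodal identification to the one acting only on the coordinates outside $T$. Your route---stable triviality of $(n_t+1)\xi_{n_1}$ from Adams' computation $\widetilde{KO}(P^{n_1})\cong\Z/2^{\phi(n_1)}$ \cite{vfld}, upgraded to honest triviality because the rank exceeds $\dim P^{n_1}$, then iterated through (\ref{sph})---is sound and also produces a homeomorphism, as the statement requires. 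For part (2), the paper suspends the hypothetical equivalence and compares wedge summands in the splitting of Theorem \ref{splitthm}: the right-hand side has an extra summand $S^{n_2+1}$, which the left-hand side can supply only if the bottom cell of $P_{n_2+1}^{n_1+n_2+1}=T((n_2+1)\xi_{n_1})$ splits off, and by James--Adams this happens iff $\nu(n_2+1)\ge\phi(n_1)$. You instead use the homotopy invariance of the Spivak normal fibration (equivalently, Atiyah's theorem on the stable fibre homotopy type of the tangent bundle of a closed manifold) together with Adams' theorem that $J(\xi_{n_1}-1)$ has order exactly $2^{\phi(n_1)}$ \cite{vfld}. Both arguments rest on the same hard input, Adams' vector field theorem; yours bypasses Theorem \ref{splitthm} entirely at the price of importing the Spivak machinery.

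There is, however, one genuine gap in your part (2), exactly at the step you flagged as structural. You claim that $i\notin T$ forces $n_i\ge2$ (true) and that this gives $H^1(-;\zt)=\zt$ for both spaces (false in general): by Theorem \ref{cohthm}, the rank of $H^1(P_\nbar;\zt)$ is $1+\#\{j\ge2:\ n_j=1\}$, so one-dimensionality requires that at most one entry of $\nbar$ equal $1$, not merely that $n_i\ge2$. Take $\nbar=(1,1,2,2)$ and $i=3$: then $\phi(n_1)=1$ and $T=\{2\}$, so the hypotheses of part (2) hold, yet $H^1(P_\nbar;\zt)$ has rank $2$. In such a case your identification $h^*\eta\cong\xi_\nbar$ is unjustified: $h^*w_1(\eta)$ is only some nonzero class, and if its restriction to the diagonal $P^{n_1}$ were zero, your $J$-theoretic equation would degenerate to $(|\nbar|+r)J(\xi_{n_1}-1)=0$, which in this example reads $10\cdot J(\xi_1-1)=0$ in $J(P^1)\cong\Z/2$ and is no contradiction at all.

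Fortunately the gap is localized and easy to close. Two entries equal to $1$ force $n_1=1$, hence $\phi(n_1)=1$, hence $i\notin T$ means $n_i$ is even. Then $|\nbar|+r$ and $|\mbar|+(r-1)$ differ by the odd number $n_i+1$, so by (\ref{tang}) (applied to both $P_\nbar$ and $P_\mbar$) exactly one of $P_\nbar$ and $P_\mbar\times S^{n_i}$ is orientable, and orientability of a closed connected manifold is a homotopy invariant; no equivalence can exist. So you should split part (2) into two cases: when $n_i$ is even, orientability (that is, $w_1$) alone settles it; when $n_i$ is odd, $\nu(n_i+1)\ge1$ and $i\notin T$ force $\phi(n_1)\ge2$, hence $n_1\ge2$, hence every entry of $\nbar$ is at least $2$, so $H^1(P_\nbar;\zt)=\zt$ and your argument goes through verbatim.
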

\begin{proof} (1). For each $i\in T$, there is an action of $S^{n_1}$ on $S^{n_i}$ via Clifford modules (\cite{ABS}).
The homeomorphism
$$S^{n_1}\times\cdots\times S^{n_r}\quad \mapright{\prod f_i}\quad S^{n_1}\times\cdots\times S^{n_r}$$
defined by
$$f_i(x_1,\ldots,x_r)=\begin{cases} x_i&i\not\in T\\
x_1\cdot x_i&i\in T\end{cases}$$
passes to the desired homeomorphism
$$P_\mbar\times\prod_{i\in T}S^{n_i}\to P_\nbar.$$

(2) Assume such an equivalence exists, and, without loss of generality, that $i=2$. Using \ref{splitthm}, its suspension is an equivalence
\begin{eqnarray*}&&\bigvee_{\ubar\subset(n_2,\ldots,n_r)}\Sigma^{1-\ell(\ubar)}P_{|\ubar|+\ell(\ubar)}^{n_1+|\ubar|+\ell(\ubar)}\\
&\simeq&S^{n_2+1}\vee\bigvee_{\ubar\subset(n_3,\ldots,n_r)}\biggl(\Sigma^{1-\ell(\ubar)}P_{|\ubar|+\ell(\ubar)}^{n_1+|\ubar|+\ell(\ubar)}
\vee\Sigma^{2+n_2-\ell(\ubar)}P_{|\ubar|+\ell(\ubar)}^{n_1+|\ubar|+\ell(\ubar)}\biggr).
\end{eqnarray*}
The wedge summands common to both sides, $\bigvee\limits_{\ubar\subset(n_3,\ldots,n_r)}\Sigma^{1-\ell(\ubar)}P_{|\ubar|+\ell(\ubar)}^{n_1+|\ubar|+\ell(\ubar)}$,
might have some common summands of $S^{n_2+1}$, but  the only way that the LHS could have one in addition to those is if the bottom
cell of $P_{n_2+1}^{n_1+n_2+1}$ splits off, and this happens iff $\nu(n_2+1)\ge\phi(n_1)$.
\end{proof}

Next we determine the ring $K^*(P_\nbar)$. Here $K^*(-)$ denotes unreduced $\zt$-graded periodic complex $K$-theory.
The result is quite similar to that for integral cohomology.
\begin{thm}\label{Kthm} Let $\nbar$, $m_1$, $\eps$, and $E$ be as in \ref{intcoh}. There is an isomorphism of
$\zt$-graded rings,
$$K^*(P_\nbar)\approx(A_K\oplus B_K\oplus C_K)\otimes\Lambda[x_{n_i}:\ i>1,\ n_i\text{ odd}],$$
with
\begin{eqnarray*}A_K&=&(\bz\oplus\bz/2^{m_1})\otimes\bigl\langle\prod_{j\in S}x_{n_j}:\ S\subset E,\ |S|\text{ even}\bigr\rangle\\
B_K&=&\bigl\langle 2x_{n_1}\prod_{j\in S}x_{n_j}:\ S\subset E,\ |S|\not\equiv\eps\ (2)\bigr\rangle\\
C_K&=&\bigl\langle p_S:\ S\subset E,\ |S|\text{ odd}\bigr\rangle/2^{m_1+\eps}.
\end{eqnarray*}
Here $x_{n_j}\in K^{n_j}(P_\nbar)$ with the superscript of $K$ being considered mod 2. Also, $A_K\subset K^0(P_\nbar)$, $B_K\subset K^\eps(P_\nbar)$, and $C_K\subset K^1(P_\nbar)$. The notation for $C_K$ means that each $p_S$ has order $2^{m_1+\eps}$. Note that $S=\emptyset$ is allowed in $A_K$, and so $A_K$
contains the initial $\bz\oplus\bz/2^{m_1}$, whose generators are 1 and $g:=(1-\xi)$, satisfying $g^2=2g$.
If $i,j\in E-S$, then $x_{n_i}x_{n_j}p_S=p_{S\cup \{i,j\}}$. We have $g\cdot p_S=2p_S$. Other than obvious products with $x_{n_i}$'s, there
are no other nontrivial products.
\end{thm}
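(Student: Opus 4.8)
The plan is to get the additive structure of $K^*(P_\nbar)$ from the stable splitting of Theorem \ref{splitthm} together with the classical computation of the complex $K$-theory of stunted real projective spaces, and then to pin down the ring structure by naturality and a multiplicative Atiyah--Hirzebruch spectral sequence whose $E_2$-term is the integral cohomology ring of Theorem \ref{intcoh}.

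First I would compute the groups. Applying $\widetilde K^*(-)$ to the equivalence (\ref{splitsig}) and using that $K$-theory turns wedges into products and suspensions into degree shifts, one obtains, with gradings read mod $2$,
\[\widetilde K^*(P_\nbar)\cong\bigoplus_{\ubar\subseteq(n_2,\ldots,n_r)}\widetilde K^{*+\ell(\ubar)}\bigl(P_{|\ubar|+\ell(\ubar)}^{n_1+|\ubar|+\ell(\ubar)}\bigr).\]
Each summand is a stunted projective space with $n_1+1$ cells whose bottom cell lies in dimension $a:=|\ubar|+\ell(\ubar)=\sum_{j\in\ubar}(n_j+1)$, and $a$ is even exactly when $|\ubar\cap E|$ is even, which is precisely the case division governing Diagrams \ref{evev}--\ref{odod}. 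Recalling that $\widetilde K^*(P_a^{a+n_1})$ is, in each degree, a free part of rank at most one (from a bottom or top cell of the correct parity) together with a cyclic $2$-group whose order is $2^{m_1}$ or $2^{m_1+\eps}$ according to the four parity cases for $(a,n_1)$, I would match these four cases to $A_K$, $B_K$, $C_K$ exactly as the four diagrams were matched to $A$, $B$, $C$ in \ref{intcoh}. This identifies each $\ubar$ with the monomial $\prod_{j\in\ubar}x_{n_j}$: the free parts produce the leading $\bz$ of $A_K$, all of $B_K$, and the exterior generators $x_{n_i}$ ($n_i$ odd), while the torsion produces the $\bz/2^{m_1}$ of $A_K$ and the classes $p_S$ of $C_K$, with the stated placements $A_K\subset K^0$, $B_K\subset K^\eps$, $C_K\subset K^1$.

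Next I would identify the generators and the elementary relations. The line bundle $\xi$ is pulled back from $P^{n_1}$ along the projection $P_\nbar\to P^{n_1}$, and since a complex line bundle associated to a double cover satisfies $\xi^2=1$, the generator $g=1-\xi$ obeys $g^2=(1-\xi)^2=2-2\xi=2g$; it generates the reduced $K$-theory $\bz\oplus\bz/2^{m_1}$ of $P^{n_1}$, giving the initial factor of $A_K$. The map $P_\nbar\to P^{n_1}$ makes $K^*(P_\nbar)$ a module over $K^*(P^{n_1})$, in which $g$ acts on any $2$-power-torsion class as multiplication by $2$ (as it does on its own cyclic subgroup, where $g\cdot(kg)=kg^2=2kg$); this immediately yields $g\cdot p_S=2p_S$.

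Finally, the ring structure. I would run the multiplicative Atiyah--Hirzebruch spectral sequence $H^*(P_\nbar;\bz)\Rightarrow K^*(P_\nbar)$, whose $E_2$-term is the ring of Theorem \ref{intcoh} and whose associated graded is fixed by the group computation above. The products among the $x_{n_i}$ and the exterior structure $\Lambda[x_{n_i}:n_i\text{ odd}]$ then transcribe from \ref{intcoh}, and the relations $x_{n_i}x_{n_j}p_S=p_{S\cup\{i,j\}}$ follow from the multiplicative structure of the spectral sequence combined with naturality with respect to the sub-tuples $\nbar'\subseteq\nbar$. The main obstacle is exactly this last step: the spectral sequence delivers only the associated graded ring, so the real work is resolving the multiplicative extensions—above all, confirming $g\cdot p_S=2p_S$ rather than $0$, and that $x_{n_i}x_{n_j}p_S$ hits the generator $p_{S\cup\{i,j\}}$ on the nose rather than a proper multiple. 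I expect to settle these by the module structure over $K^*(P^{n_1})$ together with explicit computation in the two- and three-factor cases $P_{(n_1,n_2)}$ and $P_{(n_1,n_2,n_3)}$, where the groups are small enough to verify the coefficients directly and from which the general case follows by naturality.
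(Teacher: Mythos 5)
Your overall route---additive structure from the splitting of Theorem \ref{splitthm} together with Adams' computation of $K^*(P_n^k)$, then products from the multiplicative Atiyah--Hirzebruch spectral sequence whose $E_2$-term is given by Theorem \ref{intcoh}---is the same as the paper's, and that part of the proposal is sound. The genuine gap is your treatment of the one multiplicative extension that you yourself single out as the crux: $g\cdot p_S=2p_S$. The parenthetical argument (``$g$ acts on any $2$-power-torsion class as multiplication by $2$, as it does on its own cyclic subgroup'') is a non sequitur: the identity $g^2=2g$ in $K^*(P^{n_1})$ constrains only the subring generated by $g$, and says nothing about how $g$ acts on an arbitrary torsion class in a $K^*(P^{n_1})$-module. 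Algebraically, a cyclic group $\bz/2^{m_1+\eps}$ on which $g$ acts as $0$ is a perfectly good module over $\bz[g]/(g^2-2g,\ 2^{\phi(n_1)}g)$, so some geometric input about $p_S$ is unavoidable. Your fallback---explicit computation in the two- and three-factor cases, then naturality---is not carried out, and smallness of the groups does not by itself determine the module structure: for $P_{(n_1,n_2)}$ you face exactly the same extension question.

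What is missing is the identification the paper supplies at this point. Under the splitting (\ref{splitsig}), $p_S$ corresponds (for $\ubar$ the subtuple corresponding to $S$, with $\ell(\ubar)$ odd) to $2^{(|\ubar|+\ell(\ubar)-1)/2}(1-\xi_{|\ubar|+\ell(\ubar)+n_1})$ in $\widetilde K$ of the stunted projective space $P_{|\ubar|+\ell(\ubar)}^{|\ubar|+\ell(\ubar)+n_1}$, which is the Thom space $T((|\ubar|+\ell(\ubar))\xi_{n_1})$. The $K(P^{n_1})$-action on $K(\Sigma P_\nbar)$ coming from the projection $P_\nbar\to P^{n_1}$ is then matched with the $K(D(\theta))$-action on $K(T(\theta))$, and this compatibility must be verified against the explicit splitting maps (\ref{explicit}) and (\ref{suspended}). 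Once that is in place, $g\cdot p_S$ corresponds to $(1-\xi)\cdot 2^e(1-\xi)=2\cdot 2^e(1-\xi)$, i.e.\ to $2p_S$. (The paper also notes the product can be read off from the AHSS, using \ref{splitthm} and $K^*(P_n^k)$ to see that the rows extend cyclically.) So the step you flagged as the ``main obstacle'' is precisely the step requiring a new idea---compatibility of the splitting with the $K(P^{n_1})$-module structures---and your proposal does not contain it.
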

\begin{proof} This mostly follows from the Atiyah-Hirzebruch spectral sequence (AHSS)
$$H^*(P_\nbar;K^*(pt))\Rightarrow K^*(P_\nbar).$$
We use the description of $H^*(P_\nbar;\bz)$ in \ref{intcoh} together with Diagrams \ref{evev}, \ref{evod}, \ref{odev}, and
\ref{odod}. We use Theorem \ref{splitthm} and the well-known result for $K^*(P_n^k)$ (\cite{vfld}) to see that there are no differentials in the AHSS
and that the $\zt$'s along a row in one of the diagrams extend cyclically.
Most of the product structure can be seen in the AHSS, including the product $g\cdot p_S$.

A more $K$-theoretic way to see $g\cdot p_S=2p_S$ can be obtained using Theorem \ref{splitthm}. First note that
$p_S$ can be interpreted as an element of $K^0(\Sigma P_\nbar)$, and, using \ref{splitthm}, as an element
of $K^0(\Sigma^{1-\ell(\ubar)}P_{|\ubar|+\ell(\ubar)}^{|\ubar|+\ell(\ubar)+n_1})$ with $\ell(\ubar)$ odd, for
$\ubar$ corresponding to $S$. Then $p_S$ corresponds to $2^{(|\ubar|+\ell(\ubar)-1)/2}(1-\xi_{|\ubar|+\ell(\ubar)+n_1})
\in K(P_{|\ubar|+\ell(\ubar)}^{|\ubar|+\ell(\ubar)+n_1})$. There is an action of $K(P^{n_1})$ on $K(P_{|\ubar|+\ell(\ubar)}^{|\ubar|+\ell(\ubar)+n_1})\approx K(T(|\ubar|+\ell(\ubar))\xi_{n_1})$ using the
action of $K(D(\theta))$ on $K(T(\theta))$, and an action of $K(P^{n_1})$ on $K(\Sigma P_\nbar)$ using the projection map $P_\nbar\to P^{n_1}$. Using (\ref{explicit}) and (\ref{suspended}), one can show that the isomorphism of \ref{splitthm} is
compatible with these actions. Thus $g\cdot p_S$ corresponds to $(1-\xi)\cdot 2^e(1-\xi)=2(2^e(1-\xi))$, for
appropriate $e$.
\end{proof}

\section{Manifold properties}\label{mfsec}
Two properties of manifolds $M$ studied by algebraic topologists are $\spa(M)$ and $\imm(M)$, defined by
\begin{defin}\label{defs} If $M$ is a differentiable manifold, $\spa(M)$ is the maximal number of linearly
independent tangent vector fields on $M$, while $\imm(M)$ is the dimension of the smallest Euclidean space
in which $M$ can be immersed.\end{defin}
\noindent In this section, we study $\spa(P_\nbar)$ and $\imm(P_\nbar)$. We also determine exactly when $P_\nbar$ is parallelizable.

The answers are related to $\spa(k\xi_n)$ and $\gd(\ell\xi_n)$, where $k$ is a positive integer, $\ell$ an integer,
and $\xi_n$ the Hopf bundle over real projective space $P^n$. These much-studied quantities are defined in
\begin{defin}\label{gddef} If $\theta$ is a vector bundle over a topological space $X$, $\spa(\theta)$ is the
maximal number of linearly independent (l.i.) sections of $\theta$. If $\eta$ is a stable vector bundle over $X$,
then its geometric dimension, $\gd(\eta)$, is the smallest $k$ such that there is a $k$-plane bundle over $X$
stably equivalent to $\eta$.\end{defin}

The following facts relating these concepts are well-known.
\begin{prop}\begin{itemize}
\item If $M$ is a manifold with tangent bundle $\tau(M)$, then $\spa(M)=\spa(\tau(M))$.
\item If $\theta$ is a $d$-dimensional vector bundle over a finite-dimensional CW complex $X$ with $d>\dim(X)$,
then $\gd(\theta)+\spa(\theta)=d$.
\item If $m>0$,  $\nu(L)\ge\phi(n)$, and $L-m>n$, then $$\gd(-m\xi_n)=\gd((L-m)\xi_n)=L-m-\spa((L-m)\xi_n).$$

\end{itemize}\label{easy}
\end{prop}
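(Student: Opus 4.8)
The plan is to prove the three bullet points of Proposition \ref{easy} by assembling standard facts from the theory of vector bundles, sections, and geometric dimension, since each is a well-known result whose proof is a short argument rather than a new computation.

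For the first bullet, $\spa(M)=\spa(\tau(M))$ is essentially a tautology: by Definition \ref{defs}, a tangent vector field on $M$ is exactly a section of the tangent bundle $\tau(M)$, and linear independence of vector fields at each point is the same as linear independence of the corresponding sections. Thus the two maximal counts coincide, and there is nothing further to prove beyond unwinding the definitions.

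For the second bullet, I would argue as follows. Write $d=\dim\theta$ and suppose $d>\dim X$. If $\theta$ has $k$ linearly independent sections, these span a trivial subbundle $\epsilon^k\subseteq\theta$, so $\theta\cong\epsilon^k\oplus\theta'$ with $\dim\theta'=d-k$; hence $\theta-\epsilon^k$ has a $(d-k)$-dimensional representative, giving $\gd(\theta)\le d-\spa(\theta)$, i.e. $\gd(\theta)+\spa(\theta)\le d$. Conversely, if $\theta$ is stably equivalent to a bundle $\eta$ of dimension $g=\gd(\theta)$, then over the finite complex $X$ the stable equivalence $\theta\oplus\epsilon^a\cong\eta\oplus\epsilon^b$ with $d+a=g+b$ becomes an honest isomorphism for large stabilization, and the dimension condition $d>\dim X$ lets one split off $\epsilon^{d-g}$ as a trivial summand of $\theta$ itself (not just stably), producing $d-g$ linearly independent sections; this gives $\spa(\theta)\ge d-\gd(\theta)$, hence equality. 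The key point making this clean is the hypothesis $d>\dim X$, which guarantees that stable splittings descend to unstable ones.

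The third bullet is the one requiring the most care. The two asserted equalities should be read separately. For $\gd(-m\xi_n)=\gd((L-m)\xi_n)$, I would use that, since $\nu(L)\ge\phi(n)$, the bundle $L\xi_n$ is trivial (this is the classical James--Adams fact that $\phi(n)$ equals the order of $\xi_n$ in $\widetilde{KO}(P^n)$, so $\nu(L)\ge\phi(n)$ forces $L\xi_n$ stably trivial), and therefore $-m\xi_n$ and $(L-m)\xi_n$ are stably equivalent stable bundles, whence they have equal geometric dimension. For the second equality, the hypothesis $L-m>n=\dim P^n$ places $(L-m)\xi_n$ in the range where the second bullet applies (with $X=P^n$, $d=L-m$, $\theta=(L-m)\xi_n$), giving $\gd((L-m)\xi_n)=(L-m)-\spa((L-m)\xi_n)$. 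I expect the main obstacle to be pinning down the precise form of the James number / order-of-Hopf-bundle statement that justifies the stable equivalence $-m\xi_n\simeq(L-m)\xi_n$ and verifying the inequalities ($m>0$, $L-m>n$) suffice to invoke the second bullet; once that classical input is correctly invoked, the rest is formal bookkeeping.
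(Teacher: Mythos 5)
Your proposal is correct. The paper offers no proof of Proposition \ref{easy} at all—it is stated as a collection of well-known facts—and your arguments are exactly the standard justifications it implicitly relies on: the tautological identification of tangent vector fields with sections of $\tau(M)$; the stable-range cancellation theorem (bundles of dimension $>\dim X$ over a finite complex that are stably isomorphic are isomorphic, since $[X,BO(d)]\to[X,BO]$ is bijective for $d>\dim X$) for the second bullet; and for the third, Adams' theorem that the reduced class $\xi_n-1$ has order $2^{\phi(n)}$ in $\widetilde{KO}(P^n)$ (a minor slip in your phrasing: the order is $2^{\phi(n)}$, not $\phi(n)$, though you use it correctly), so $\nu(L)\ge\phi(n)$ makes $-m\xi_n$ and $(L-m)\xi_n$ stably equal, after which the condition $L-m>n$ lets the second bullet apply over $P^n$.
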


The immersion dimension of $P_\nbar$ is related to the geometric dimension of a stable vector bundle over
a projective space in the following result. It seems somewhat strange that $\imm(P_\nbar)$ does not depend
on the values of most of the $n_i$.
\begin{thm} If $\nbar=(n_1,\ldots,n_r)$ with $n_1\le n_i$ for all $i$, then
$$\imm(P_\nbar)=|\nbar|+\max(\gd(-(|\nbar|+r)\xi_{n_1}),1).$$
\label{immthm}\end{thm}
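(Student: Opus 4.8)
The plan is to combine the Hirsch immersion theorem with a computation of the stable normal bundle of $P_\nbar$ and a comparison of geometric dimensions over $P_\nbar$ and $P^{n_1}$.

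First I would compute the stable tangent bundle. Write $S_\nbar=S^{n_1}\times\cdots\times S^{n_r}$ and let $\rho\colon S_\nbar\to P_\nbar$ be the double cover, so $\rho^*\tau(P_\nbar)=\tau(S_\nbar)$ and $\tau(P_\nbar)$ is the quotient of $\tau(S_\nbar)$ by the differential of the antipodal involution, which acts as $-\mathrm{id}$ on every tangent (and ambient) fiber. Embedding $S_\nbar$ as a product of unit spheres in $\br^{|\nbar|+r}$ gives an equivariant splitting $\tau(S_\nbar)\oplus\nu\cong S_\nbar\times\br^{|\nbar|+r}$, where $\nu$ is the $r$-dimensional normal bundle. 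Since the involution acts by $-\mathrm{id}$ on the ambient $\br^{|\nbar|+r}$, the trivial bundle descends to $(|\nbar|+r)\xi_\nbar$. The one point needing care is the descent of $\nu$: trivializing $\nu$ by the $r$ outward radial normals $\hat n_i$, the differential $-\mathrm{id}$ carries $\hat n_i(\xbar)$ to $\hat n_i(-\xbar)$, i.e.\ it is the identity in the radial trivialization, so $\nu$ descends to a trivial bundle $\eps^r$. Hence $\tau(P_\nbar)\oplus\eps^r\cong(|\nbar|+r)\xi_\nbar$, and the stable normal bundle of $P_\nbar$ has stable class $-(|\nbar|+r)\xi_\nbar$ (the cancelled trivial summand affects neither the stable class nor its geometric dimension).

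Next I would invoke Hirsch's theorem: for a nonempty closed smooth $n$-manifold $M$ with stable normal bundle $\nu$, one has $\imm(M)=n+\max(\gd(\nu),1)$. Indeed, for $k\ge1$ a codimension-$k$ immersion exists if and only if $\nu$ admits a $k$-plane representative, and a nonempty closed manifold never immerses in its own dimension, which accounts for the $\max$ with $1$. Combined with the previous step this gives $\imm(P_\nbar)=|\nbar|+\max(\gd(-(|\nbar|+r)\xi_\nbar),1)$.

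It then remains to identify $\gd(-(|\nbar|+r)\xi_\nbar)$, computed over $P_\nbar$, with $\gd(-(|\nbar|+r)\xi_{n_1})$, computed over $P^{n_1}$. Let $q\colon P_\nbar\to P^{n_1}$ be the iterated projection, so $q^*\xi_{n_1}=\xi_\nbar$, and let $s\colon P^{n_1}\to P_\nbar$ be the iterate of the sections $j$ from the proof of Theorem \ref{cohthm}, so that $q\circ s=\mathrm{id}_{P^{n_1}}$. Since pulling back a $k$-plane representative along any map preserves both its dimension and its stable class, $q^*$ yields $\gd(-(|\nbar|+r)\xi_\nbar)\le\gd(-(|\nbar|+r)\xi_{n_1})$, while $s^*$ (using $q\circ s=\mathrm{id}$ and $q^*\xi_{n_1}=\xi_\nbar$) yields the reverse inequality; hence the two geometric dimensions agree, completing the proof. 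The main obstacle is the equivariant computation of the first paragraph, and in particular getting the descent of $\nu$ correct (it is trivial, not $r\xi_\nbar$); this can be cross-checked against the classical case $r=1$, where it recovers $\tau(P^{n_1})\oplus\eps^1\cong(n_1+1)\xi_{n_1}$. Everything else is formal.
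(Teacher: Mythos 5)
Your proposal is correct and follows essentially the same route as the paper: establish $\tau(P_\nbar)\oplus r\vareps\cong(|\nbar|+r)\xi_\nbar$ (the paper writes this isomorphism explicitly, while you derive it by equivariant descent of the normal-bundle splitting of the product of spheres -- the same computation in different clothing), apply Hirsch's theorem, and then identify $\gd(-(|\nbar|+r)\xi_\nbar)$ with $\gd(-(|\nbar|+r)\xi_{n_1})$ using the projection $p$ and the diagonal section $j$ with $p^*\xi_{n_1}=\xi_\nbar$ and $p\circ j=\mathrm{id}$. All three steps match the paper's proof, including the treatment of the $\max(\cdot,1)$ term.
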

\begin{proof} The tangent bundle $\tau(P_\nbar)$ is given by
$$\{(\ubar,\xbar)\in\br^{n_1+1}\times\cdots\times\br^{n_r+1}\times S^{n_1}\times\cdots\times S^{n_r}: u_i\perp x_i\ \forall i\}/((\ubar,\xbar)\sim(-\ubar,-\xbar)),$$
with $\ubar=(u_1,\ldots,u_r)$ and $\xbar=(x_1,\ldots,x_r)$.
There is a vector bundle isomorphism
\begin{equation}\label{tang}\tau(P_\nbar)\oplus r\vareps\mapright{\approx}(|\nbar|+r)\xi_\nbar\end{equation}
defined by
$$([u_1,\ldots,u_r,x_1,\ldots,x_r],t_1,\ldots,t_r)\mapsto[u_1+t_1x_1,\ldots,u_r+t_rx_r,\xbar].$$
Here and throughout $r\vareps$ denotes a trivial bundle of dimension $r$, and $t_i\in\br$.

The maps $$P^{n_1}\mapright{j}P_\nbar \mapright{p} P^{n_1}$$
defined by $j([x])=[x,\ldots,x]$ and $p([x_1,\ldots,x_r])=[x_1]$ satisfy \begin{equation}\label{xis}p^*(\xi_{n_1})=\xi_\nbar\quad\text{and}\quad
j^*(\xi_\nbar)=\xi_{n_1}.\end{equation}  By \cite{Hirsch}, $\imm(P_\nbar)$ equals $|\nbar|$ plus the geometric dimension
of the stable normal bundle of $P_\nbar$, unless this gd is 0, in which case $\imm(P_\nbar)=|\nbar|+1$. By (\ref{tang}), the stable normal bundle of $P_\nbar$ is
$-(|\nbar|+r)\xi_\nbar$. By (\ref{xis}), we have $\gd(-(|\nbar|+r)\xi_\nbar)\le\gd(-(|\nbar|+r)\xi_{n_1})$ and
$\gd(-(|\nbar|+r)\xi_{n_1})\le\gd(-(|\nbar|+r)\xi_{\nbar})$, implying the result.
\end{proof}

Since, by obstruction theory, if $\eta$ is a stable vector bundle over a CW complex $X$, then $\gd(\eta)\le \dim(X)$, we obtain the
following surprising corollary.
\begin{cor} If $n_1\le n_i$ for all $i$, then $P_\nbar$ can be immersed in $\br^{|\nbar|+n_1}$.\label{immcor}
\end{cor}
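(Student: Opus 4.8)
The plan is to read the corollary directly off of Theorem \ref{immthm} together with the standard obstruction-theoretic bound on geometric dimension mentioned just before the corollary. The statement to be proved is that $P_\nbar$ immerses in $\br^{|\nbar|+n_1}$, and Theorem \ref{immthm} already gives the exact value
$$\imm(P_\nbar)=|\nbar|+\max(\gd(-(|\nbar|+r)\xi_{n_1}),1),$$
so the only thing left is to bound the quantity $\max(\gd(-(|\nbar|+r)\xi_{n_1}),1)$ from above by $n_1$.

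First I would note that the stable normal bundle contribution is computed over the projective space $P^{n_1}$, which is a CW complex of dimension $n_1$. By obstruction theory — exactly the fact quoted in the sentence preceding the corollary, namely that $\gd(\eta)\le\dim(X)$ for any stable bundle $\eta$ over a CW complex $X$ — I get
$$\gd(-(|\nbar|+r)\xi_{n_1})\le n_1.$$
Second, I would observe that since $n_1\ge1$ (each $n_i$ is a positive integer), the trivial lower bound $1$ appearing in the $\max$ also satisfies $1\le n_1$. Hence $\max(\gd(-(|\nbar|+r)\xi_{n_1}),1)\le n_1$ in all cases.

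Combining these two observations with the formula from Theorem \ref{immthm} gives $\imm(P_\nbar)\le|\nbar|+n_1$, which is exactly the assertion that $P_\nbar$ immerses in $\br^{|\nbar|+n_1}$.

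There is essentially no obstacle here: the work has all been done in Theorem \ref{immthm}, and the corollary is a one-line consequence of applying the general dimension bound for geometric dimension to the base space $P^{n_1}$. The only point requiring any care is handling the $\max$ with the constant $1$, but this is harmless precisely because $n_1\ge1$, so the dimension bound dominates. I expect the entire argument to fit in a couple of sentences, which is consistent with the word ``surprising'' in the lead-in: the content is that the immersion codimension is controlled by $\min(n_i)=n_1$ alone, independent of $r$ and the larger spheres, which is immediate once the gd is pinned to the low-dimensional base $P^{n_1}$.
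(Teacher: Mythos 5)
Your proof is correct and is exactly the paper's argument: the corollary is stated immediately after the observation that $\gd(\eta)\le\dim(X)$ by obstruction theory, applied with $X=P^{n_1}$ in the formula of Theorem \ref{immthm}. Your extra care with the $\max(\cdot,1)$ term (using $n_1\ge1$) is a detail the paper leaves implicit, but it matches the intended reasoning.
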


An immediate corollary of (\ref{tang}) is
\begin{cor} $P_\nbar$ is orientable if and only if $|\nbar|+r$ is even.\end{cor}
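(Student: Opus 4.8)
The plan is to compute the first Stiefel--Whitney class of $\tau(P_\nbar)$ and invoke the standard fact that a smooth closed manifold is orientable precisely when $w_1$ of its tangent bundle vanishes. Everything needed is already packaged in the bundle isomorphism (\ref{tang}), so the argument amounts to a single mod-$2$ coefficient count.

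First I would apply $w_1$ to both sides of $\tau(P_\nbar)\oplus r\vareps\cong(|\nbar|+r)\xi_\nbar$. Since $w_1$ is additive over Whitney sums and $w_1(r\vareps)=0$, the left-hand side contributes exactly $w_1(\tau(P_\nbar))$. The right-hand side is a Whitney sum of $|\nbar|+r$ copies of the line bundle $\xi_\nbar$, so $w_1\bigl((|\nbar|+r)\xi_\nbar\bigr)=(|\nbar|+r)\,w_1(\xi_\nbar)$, the coefficient being read mod $2$.

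Next I would identify $w_1(\xi_\nbar)$. By (\ref{xis}) we have $\xi_\nbar=p^*(\xi_{n_1})$ for the projection $p\colon P_\nbar\to P^{n_1}$, whence $w_1(\xi_\nbar)=p^*w_1(\xi_{n_1})=y$, the degree-one generator appearing in Theorem \ref{cohthm}. This class is nonzero because $n_1\ge1$ forces $y\ne0$ in $\zt[y]/y^{n_1+1}$. Therefore $w_1(\tau(P_\nbar))=(|\nbar|+r)\,y$ in $H^1(P_\nbar;\zt)$, and this vanishes exactly when $|\nbar|+r$ is even. Combining with the orientability criterion yields the stated equivalence.

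There is no genuine obstacle here: once (\ref{tang}) is in hand the proof is immediate, and the only point requiring a word of justification is that $y$ is truly nonzero, which is read off directly from Theorem \ref{cohthm}. (One could equally phrase the whole computation in terms of $\sq(y)=y(1+y)$ and the stable normal bundle $-(|\nbar|+r)\xi_\nbar$ identified in the proof of Theorem \ref{immthm}, but the $w_1$ argument above is the shortest route.)
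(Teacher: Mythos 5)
Your proof is correct and follows exactly the route the paper intends: the paper states this corollary as ``immediate'' from the bundle isomorphism (\ref{tang}), and your explicit computation $w_1(\tau(P_\nbar))=(|\nbar|+r)\,w_1(\xi_\nbar)=(|\nbar|+r)\,y$ is precisely the unstated calculation behind that claim. Nothing is missing; identifying $w_1(\xi_\nbar)=y\ne0$ via (\ref{xis}) and Theorem \ref{cohthm} is the right justification for the ``only if'' direction.
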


Geometric dimension of multiples of the Hopf bundle over real projective spaces, sometimes called the
generalized vector field problem, has been studied in many papers such as \cite{Ad}, \cite{Annals}, \cite{thesis}, \cite{Lam}, and \cite{LR}.
One consequence of Theorem \ref{immthm} is that every case of the generalized vector field problem is solving an immersion question
for some manifold.
In Section \ref{numsec}, we combine specific results on the generalized vector field problem with Theorem \ref{immthm} to obtain numerical bounds on $\imm(P_\nbar)$
for certain $\nbar$.

Our second manifold result, involving $\spa(P_\nbar)$, is similar, but not quite so complete.
It is better expressed in terms of stable span, defined for a manifold $M$ by $\ssp(M)= \spa(\tau(M)+\vareps)-1$. It is a well-known
consequence of obstruction theory that if $r\ge1$, then
$\spa(\tau(M)+r\vareps)-r$ is independent of $r$, and hence equals  $\ssp(M)$.
Clearly $\spa(M)\le\ssp(M)$.
\begin{thm}\begin{enumerate}
\item If $\nbar=(n_1,\ldots,n_r)$ with $n_1\le n_i$ for all $i$, then
\begin{equation}\label{spanineq}\ssp(P_\nbar)=\spa((|\nbar|+r)\xi_{n_1})-r.\end{equation}
\item $\spa(P_\nbar)=0$ if and only if all $n_i$ are even.
\item If $|\nbar|$ is even but not all $n_i$ are even, then $\spa(P_\nbar)=\ssp(P_\nbar)$.

\item If $|\nbar|\equiv3\pmod8$ and $r\equiv 1\pmod4$, then $\spa(P_\nbar)=\ssp(P_\nbar)$.
\end{enumerate}\label{spanthm}
\end{thm}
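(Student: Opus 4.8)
The plan is to refer everything back to the tangent bundle identification (\ref{tang}) and then invoke the standard comparison results between span and stable span, organized according to $|\nbar|\bmod 8$ and the parity of the $n_i$. For part (1), recall that $\spa(\tau(P_\nbar)+s\vareps)-s$ is independent of $s\ge1$ and equals $\ssp(P_\nbar)$. Taking $s=r$ and applying (\ref{tang}), which reads $\tau(P_\nbar)+r\vareps\cong(|\nbar|+r)\xi_\nbar$, gives $\ssp(P_\nbar)=\spa((|\nbar|+r)\xi_\nbar)-r$. It then suffices to prove $\spa((|\nbar|+r)\xi_\nbar)=\spa((|\nbar|+r)\xi_{n_1})$. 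This I would obtain by a sandwich argument using the maps $j\colon P^{n_1}\to P_\nbar$ and $p\colon P_\nbar\to P^{n_1}$ of (\ref{xis}): since $p^*\xi_{n_1}=\xi_\nbar$, pulling back linearly independent sections along $p$ yields $\spa((|\nbar|+r)\xi_{n_1})\le\spa((|\nbar|+r)\xi_\nbar)$, and since $j^*\xi_\nbar=\xi_{n_1}$, pulling back along $j$ gives the reverse inequality. (Pullback of sections preserves pointwise linear independence.) This establishes (\ref{spanineq}).

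For part (2), I would compute the Euler characteristic from the double cover $S^{n_1}\times\cdots\times S^{n_r}\to P_\nbar$, which gives $2\chi(P_\nbar)=\prod_i(1+(-1)^{n_i})$. The product is $2^r\ne0$ when every $n_i$ is even and is $0$ as soon as some $n_i$ is odd. By the Poincar\'e--Hopf theorem, $\spa(P_\nbar)=0$ iff $\chi(P_\nbar)\ne0$, i.e.\ iff all $n_i$ are even.

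Parts (3) and (4) both have $\chi(P_\nbar)=0$ (in (3) because some $n_i$ is odd, in (4) because $|\nbar|$ is odd), so $\spa(P_\nbar)\ge1$ and the question is whether the stable span can be destabilized. For part (3), $|\nbar|$ is even, and I would invoke the classical fact (part of the Atiyah--Dupont/E.~Thomas theory) that for a closed manifold of even dimension the Euler characteristic is the sole obstruction separating span from stable span; since $\chi(P_\nbar)=0$ this forces $\spa(P_\nbar)=\ssp(P_\nbar)$. For part (4), $|\nbar|\equiv3\pmod 8$ is odd and, since $r$ is odd, the orientability criterion noted after \ref{immthm} (that $P_\nbar$ is orientable iff $|\nbar|+r$ is even) shows $P_\nbar$ is orientable. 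For odd-dimensional manifolds the discrepancy $\ssp-\spa$ is at most $1$ and is detected by a single $\zt$-valued semicharacteristic-type obstruction; I would cite the appropriate form of the comparison theorem for dimension $\equiv3\pmod 8$ and then evaluate the obstruction on $P_\nbar$ using the explicit cohomology of Theorems \ref{cohthm} and \ref{fldcoef}. Poincar\'e duality collapses the relevant count to half the total dimension of $H^*(P_\nbar)$, and the residual parity should be a function of $r\bmod 4$ alone; I expect it to equal $\binom r2\bmod 2$, which vanishes exactly when $r\equiv1\pmod 4$.

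The main obstacle is squarely part (4): identifying the precise semicharacteristic (or characteristic-number) obstruction governing $\spa$ versus $\ssp$ in dimension $\equiv3\pmod 8$, and computing it for $P_\nbar$. The computation must be delicate enough to distinguish $r\equiv1$ from $r\equiv3\pmod 4$ (both give orientable $P_\nbar$), so a crude mod-$2$ semicharacteristic will not suffice---the invariant has to see the finer $r\bmod 4$ information. As a consistency check, the case $P_{(n_1)}=P^{n_1}$ with $n_1\equiv3\pmod 8$ (where $r=1$ and $\spa=\ssp$ is already known) must come out with vanishing obstruction, which is consistent with the guessed value $\binom 12=0$. Parts (1)--(3) I expect to be routine once (\ref{tang}), the Poincar\'e--Hopf theorem, and the even-dimensional comparison theorem are in hand.
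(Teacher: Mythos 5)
Your parts (1)--(3) are correct and follow the paper's own route: (1) is exactly the paper's argument (both sides of (\ref{spanineq}) equal $\spa((|\nbar|+r)\xi_\nbar)-r$, one identification via (\ref{tang}), the other via the sandwich from (\ref{xis})); (2) is Hopf's theorem together with $\chi(P_\nbar)=\frac12\prod(1+(-1)^{n_i})$; and (3) is precisely the even-dimensional comparison theorem the paper cites, \cite[Theorem 20.1]{Kosh}.

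Part (4) contains a genuine gap, and it is not merely the missing citation you flag. The theorem the paper invokes is \cite[Corollary 20.10]{Kosh}: if $M$ is a \emph{Spin} manifold with $\dim M\equiv3\pmod 8$ and Kervaire semicharacteristic $\chi_2(M)=0$, then $\spa(M)=\ssp(M)$. The hypothesis $r\equiv1\pmod 4$ does not enter through the semicharacteristic computation, as your plan assumes; it enters through the Spin condition. By (\ref{tang}), $w(\tau(P_\nbar))=(1+y)^{|\nbar|+r}$, so (when $n_1\ge2$) $P_\nbar$ is Spin iff $|\nbar|+r\equiv0\pmod 4$, and with $|\nbar|\equiv3\pmod 8$ this forces $r\equiv1\pmod 4$; when $r\equiv3\pmod 4$ the manifold is orientable but has $w_2\ne0$, so the comparison theorem does not apply. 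Checking orientability alone, as you do, cannot distinguish $r\equiv1$ from $r\equiv3\pmod 4$, which is exactly the distinction you acknowledge your argument must make.

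Moreover, the semicharacteristic cannot make that distinction either. As you observe, Poincar\'e duality gives $\chi_2(M)\equiv\frac12\dim_{\zt}H^*(M;\zt)\pmod 2$ for odd-dimensional $M$; by Theorem \ref{cohthm} this is $(n_1+1)2^{r-2}\bmod 2$, which vanishes for \emph{every} $r\ge3$ (and for $r=1$ with $n_1\equiv3\pmod 8$ it equals $(n_1+1)/2\equiv0$), independently of $r\bmod 4$. So the ``residual parity'' is not a function of $r\bmod 4$, and your guessed value $\binom r2\bmod 2$ is also wrong on its own terms: it vanishes for $r\equiv0,1\pmod 4$, not exactly for $r\equiv1\pmod 4$. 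The correct assembly is: $r\equiv1\pmod 4$ gives the Spin structure; the computation just described gives $\chi_2(P_\nbar)=0$ (the only exceptions when $|\nbar|$ is odd being $r=1$ with $n_1\equiv1\pmod 4$, and $r=2$ with $n_1$ even, neither of which occurs under the hypotheses); then Koschorke's corollary finishes the proof.
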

\begin{proof} \begin{enumerate}
\item Both sides of (\ref{spanineq}) equal $\spa((|\nbar|+r)\xi_\nbar)-r$,
one side using (\ref{tang}) and the other using (\ref{xis}).

\item This is immediate from the classical theorem of Hopf that $$\spa(M)>0\text{ iff }\chi(M)=0,$$ together
with the fact, from \ref{cohthm}, that the Euler characteristic $\chi(P_\nbar)=\frac12\prod(1+(-1)^{n_i})$.

\item
Koschorke showed in \cite[Theorem 20.1]{Kosh} that if $\dim(M)$ is even and $\chi(M)=0$, then $\spa(M)=\ssp(M)$. As just noted, $\chi(P_\nbar)=0$ if and only if some $n_i$ is odd. This part of the theorem
is now immediate from Koschorke's result.

\item Koschorke also showed in \cite[Corollary 20.10]{Kosh} that if $M$ is a Spin manifold with $\dim(M)\equiv3$ mod 8 and $\chi_2(M)=0$, then
$\spa(M)=\ssp(M)$. Here $\chi_2(M)$ is the Kervaire semicharacteristic, defined, for odd-dimensional manifolds, as the mod 2 value of the sum of
the ranks of the even-dimensional mod-2 homology groups. Using Theorem \ref{cohthm}, one easily shows that if $|\nbar|$ is odd, then $\chi_2(P_\nbar)=0$ unless
$r=1$ and $n_1\equiv1$ mod 4, or $r=2$, $n_1$ is even, and $n_2$ odd.
Note that we needed $|\nbar|+r\equiv 0$ mod 4 in order that $P_\nbar$ be a Spin-manifold.
\end{enumerate}
\end{proof}

Similarly to Corollary \ref{immcor}, we have
\begin{cor} If $n_1\le n_i$ for all $i$, then $\ssp(P_\nbar)\ge |\nbar|-n_1$.\end{cor}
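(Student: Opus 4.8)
The plan is to reduce the inequality to a bound on geometric dimension, exactly in the spirit of the proof of Corollary \ref{immcor}. First I would apply Theorem \ref{spanthm}(1) to rewrite the left-hand side as $\ssp(P_\nbar)=\spa((|\nbar|+r)\xi_{n_1})-r$, so that the assertion becomes the equivalent statement $\spa((|\nbar|+r)\xi_{n_1})\ge|\nbar|+r-n_1$. This moves the entire question onto the single projective space $P^{n_1}$ and the single bundle $(|\nbar|+r)\xi_{n_1}$.

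Next I would check the dimension hypothesis needed to pass between span and geometric dimension. Since $n_i\ge n_1\ge1$ for all $i$ and $r\ge1$, we have $|\nbar|+r>n_1=\dim(P^{n_1})$, so the fibre dimension of $(|\nbar|+r)\xi_{n_1}$ strictly exceeds the dimension of its base. The second bullet of Proposition \ref{easy} then applies and gives $\spa((|\nbar|+r)\xi_{n_1})=(|\nbar|+r)-\gd((|\nbar|+r)\xi_{n_1})$. The last ingredient is the obstruction-theoretic bound $\gd(\eta)\le\dim(X)$ for a stable bundle $\eta$ over a CW complex $X$, already invoked in the excerpt just before Corollary \ref{immcor}; taking $X=P^{n_1}$ yields $\gd((|\nbar|+r)\xi_{n_1})\le n_1$. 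Combining these gives $\spa((|\nbar|+r)\xi_{n_1})\ge(|\nbar|+r)-n_1$, and subtracting $r$ produces $\ssp(P_\nbar)\ge|\nbar|-n_1$, as required.

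I expect no genuine obstacle here: every input is already recorded in the excerpt, and the argument is purely formal. The only point deserving a moment's attention is the dimension hypothesis $|\nbar|+r>n_1$ of Proposition \ref{easy}, which is automatic since $r\ge1$ and $|\nbar|\ge n_1$; this is exactly the same verification used for Corollary \ref{immcor}, which is why the corollary is flagged as being proved ``similarly.''
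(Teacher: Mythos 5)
Your proof is correct and follows exactly the route the paper intends when it says ``Similarly to Corollary \ref{immcor}'': rewrite $\ssp(P_\nbar)$ via Theorem \ref{spanthm}(1), convert span to geometric dimension using the second bullet of Proposition \ref{easy} (the dimension hypothesis $|\nbar|+r>n_1$ being automatic), and bound $\gd$ by $\dim P^{n_1}=n_1$ via the obstruction-theory fact stated before Corollary \ref{immcor}. Nothing is missing.
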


A closely-related result tells exactly when $P_\nbar$ is parallelizable. Here $\nu$ and $\phi$ are as defined prior to Theorem \ref{he}.
\begin{thm}\label{pithm} If $\nbar=(n_1,\ldots,n_r)$ with $n_1\le n_i$ for all $i$, then $P_\nbar$ is parallelizable if and only if
$\nu(|\nbar|+r)\ge\phi(n_1)$ and not all $n_i$ are even.\end{thm}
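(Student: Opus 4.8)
The plan is to convert parallelizability into two conditions using $\spa(P_\nbar)=\spa(\tau(P_\nbar))$ (Proposition \ref{easy}) together with the bundle relation $\tau(P_\nbar)\oplus r\vareps\cong(|\nbar|+r)\xi_\nbar$ of (\ref{tang}). Since $\spa(P_\nbar)\le\ssp(P_\nbar)\le|\nbar|$, the manifold $P_\nbar$ is parallelizable exactly when $\ssp(P_\nbar)=|\nbar|$ (i.e. $\tau(P_\nbar)$ is stably trivial) \emph{and} $\spa(P_\nbar)=\ssp(P_\nbar)$. I would first identify the stable clause with the arithmetic condition: by Theorem \ref{spanthm}(1), $\ssp(P_\nbar)=|\nbar|$ iff $\spa((|\nbar|+r)\xi_{n_1})=|\nbar|+r$; because $(|\nbar|+r)\xi_{n_1}$ has rank exceeding $\dim P^{n_1}=n_1$, Proposition \ref{easy} converts ``full span'' into ``stable triviality,'' and $(|\nbar|+r)\xi_{n_1}$ is stably trivial iff $|\nbar|+r$ annihilates the generator $\xi_{n_1}-1$ of $\widetilde{KO}(P^{n_1})\cong\Z/2^{\phi(n_1)}$, i.e. iff $\nu(|\nbar|+r)\ge\phi(n_1)$.

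For the forward direction, if $P_\nbar$ is parallelizable then $\spa(P_\nbar)=|\nbar|\ge1$, so by Theorem \ref{spanthm}(2) not all $n_i$ are even, and the forced equality $\ssp(P_\nbar)=|\nbar|$ gives $\nu(|\nbar|+r)\ge\phi(n_1)$. Conversely, assume both conditions. Then $\ssp(P_\nbar)=|\nbar|$ and $\chi(P_\nbar)=0$, and it remains to show $\spa(P_\nbar)=\ssp(P_\nbar)$. When $|\nbar|$ is even this is immediate from Theorem \ref{spanthm}(3).

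The substance is the odd case. Here I would first check that $P_\nbar$ is Spin: by (\ref{tang}) and (\ref{xis}) its total Stiefel--Whitney class is $w(P_\nbar)=(1+y)^{|\nbar|+r}$, and $\nu(|\nbar|+r)\ge\phi(n_1)\ge1$ forces $w_1=0$, while $w_2=\binom{|\nbar|+r}{2}y^2=0$ (using $y^2=0$ when $n_1=1$, and $4\mid|\nbar|+r$ when $n_1\ge2$, so $\phi(n_1)\ge2$). Next I would compute the Kervaire semicharacteristic from Theorem \ref{cohthm}, finding (as in the proof of Theorem \ref{spanthm}(4)) that $\chi_2(P_\nbar)=0$ outside the two families $[\,r=1,\ n_1\equiv1\ (4)\,]$ and $[\,r=2,\ n_1\text{ even},\ n_2\text{ odd}\,]$. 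When $\chi_2=0$, the equality $\spa=\ssp$ follows from the span-equals-stable-span theorems for Spin manifolds with vanishing semicharacteristic: Koschorke's results cover $|\nbar|\equiv3,7\pmod8$ (the $\equiv3$ case being exactly what underlies Theorem \ref{spanthm}(4)), and Atiyah's semicharacteristic theorem covers $|\nbar|\equiv1\pmod4$. Finally I would dispatch the two exceptional families: in the $r=2$ family $|\nbar|+r$ is odd, and in the $r=1$ family $\nu(n_1+1)=1<\phi(n_1)$ once $n_1\ge5$, so in every such configuration the arithmetic hypothesis fails and nothing needs proving; the sole survivor is $\nbar=(1)$, where $P_{(1)}=S^1$ is parallelizable by inspection.

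The main obstacle is precisely this odd-dimensional passage from stable to genuine triviality of $\tau(P_\nbar)$: stable parallelizability never suffices on its own (odd spheres being the cautionary examples), so the top obstruction must be killed, and the real work is assembling the correct span-equals-stable-span input uniformly across $|\nbar|\equiv1,5,7\pmod8$ and verifying that \emph{every} configuration with $\chi_2\neq0$ already violates $\nu(|\nbar|+r)\ge\phi(n_1)$. As a hedge I would keep Theorem \ref{he} available: whenever the per-factor condition $\nu(n_i+1)\ge\phi(n_1)$ holds it rewrites $P_\nbar$ as $P_\mbar\times\prod_{i\in T}S^{n_i}$ with each split-off $S^{n_i}$ odd-dimensional, and this—combined with the standard fact that $M\times S^1$ is parallelizable iff $M$ is stably parallelizable—furnishes a purely geometric construction of the missing vector fields in many of the odd cases.
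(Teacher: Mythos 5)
Your overall reduction---parallelizability $\Leftrightarrow$ stable parallelizability plus $\spa=\ssp$, with the stable clause converted to $\nu(|\nbar|+r)\ge\phi(n_1)$ via (\ref{tang}), (\ref{xis}), Proposition \ref{easy} and Adams' computation of $\widetilde{KO}(P^{n_1})$---matches the paper's skeleton, as do your forward direction and the even-dimensional case. The gap is in the odd-dimensional converse, which you yourself call ``the substance.'' The paper's proof rests on the theorem of Bredon and Kosinski \cite{BK}, which is special to \emph{stably parallelizable} manifolds: an odd-dimensional stably parallelizable $M$ automatically has $\spa(M)\ge\dim M-1$, and is parallelizable if and only if $\chi_2(M)=0$. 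You never invoke this; instead you appeal to span-equals-stable-span theorems for general Spin manifolds, and the inputs you name do not cover the cases you need. Atiyah's semicharacteristic theorem for $(4k+1)$-manifolds asserts only that vanishing of the real semicharacteristic is equivalent to the existence of \emph{two} linearly independent vector fields, i.e.\ $\spa\ge2$; it says nothing about $\spa=\ssp$, and here you need $\spa=|\nbar|$. Likewise, the Koschorke result the paper actually cites (\cite[Corollary 20.10]{Kosh}, used for Theorem \ref{spanthm}(4)) is for dimension $\equiv3\pmod 8$ only; your assertion that ``Koschorke's results cover $\equiv7\pmod 8$'' is unsubstantiated. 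So dimensions $\equiv1\pmod 4$ and $\equiv7\pmod 8$ are not handled.

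These cases genuinely occur, and your hedge does not rescue them. Take $\nbar=(2,2,5)$: then $|\nbar|+r=12$, $\nu(12)=2=\phi(2)$, not all $n_i$ are even, $|\nbar|=9\equiv1\pmod 4$, and $\chi_2(P_\nbar)=0$ (since $r=3$, this is not one of the two exceptional families). The theorem asserts $P_{(2,2,5)}$ is parallelizable, but your cited inputs yield at most $\spa\ge2$; and Theorem \ref{he} is useless here because $\nu(n_i+1)<2=\phi(2)$ for every $i$, so no sphere splits off and the ``$M\times S^1$'' fact never gets traction. The repair is exactly the paper's: replace the patchwork of span-equals-stable-span theorems by Bredon--Kosinski, which handles all odd dimensions uniformly once stable triviality of $\tau(P_\nbar)$ is known. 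Granting that substitution, the rest of your argument is sound---and your final bookkeeping (that every configuration with $\chi_2\ne0$ violates $\nu(|\nbar|+r)\ge\phi(n_1)$ except $\nbar=(1)$, where $P_{(1)}=S^1$ is parallelizable by inspection) is carried out more explicitly by you than by the paper, which leaves exactly that step implicit.
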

\begin{proof} Bredon and Kosinski proved in \cite{BK} that a stably parallelizable $n$-manifold $M$ is parallelizable if and only if
$n$ is even and $\chi(M)=0$ or $n$ is odd and $\chi_2(M)=0$. By (\ref{tang}) and (\ref{xis}), $\tau(P_\nbar)$ is stably trivial
iff $(|\nbar|+r)\xi_{n_1}$ is, and this is true iff $\nu(|\nbar|+r)\ge\phi(n_1)$. The theorem now follows from our observations about
$\chi(P_\nbar)$ and $\chi_2(P_\nbar)$ in the proof of \ref{spanthm}.
\end{proof}

An approach to showing that span equals stable span for an $n$-manifold $M$ was presented in \cite{JT}.
In the exact sequence
$$[\Sigma M, BO]\mapright{\delta} [M,V_n]\to [M,BO(n)] \to [M,BO],$$
$[M,V_n]$ has two elements, with the nontrivial element being detected in $\zt$-cohomology. This is the cause of the
possibility of there being an element in $[M,BO(n)]$ stably equivalent to the tangent bundle but not equal to it.
If there is an element $\a$ in $[\Sigma M,BO]=\widetilde{KO}(\Sigma M)$ such that $\delta(\a)\ne0$, then we can deduce
that span equals stable span. Such an element $\a$ is specified in \cite{JT} by a condition on its Stiefel-Whitney classes.
The splitting of $\Sigma P_\nbar$ in Theorem \ref{splitthm} enables us to understand $\widetilde{KO}(\Sigma P_\nbar)$.
However, it seems that there are no elements whose Stiefel-Whitney classes satisfy the required condition.

\section{Some numerical results for $\imm(P_\nbar)$ and $\spa(P_\nbar)$}\label{numsec}
In this section, we sample some of the known results about $\gd(k\xi_n)$ and discuss their implications for $P_\nbar$.

Using Stiefel-Whitney classes and construction of blinear maps, Lam proved the following result in \cite[Thm 1.1]{Lam}.
\begin{prop} \label{Lamprop} $\gd(k\xi_n)\ge m_0$, where $m_0$ is the largest $m\le n$ for which $\binom km$ is odd.
Equality occurs here if $\binom{[k/8]}{[n/8]}$ is odd.\end{prop}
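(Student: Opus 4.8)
The plan is to prove the two assertions by quite different means: the inequality $\gd(k\xi_n)\ge m_0$ is a pure Stiefel--Whitney obstruction, while the equality clause requires an explicit geometric construction. For the lower bound, recall that the total Stiefel--Whitney class of the Hopf bundle is $W(\xi_n)=1+y$, where $y\in H^1(P^n;\zt)$ is the generator and $y^{n+1}=0$, so that
$$W(k\xi_n)=(1+y)^k=\sum_{m\ge0}\tbinom km\,y^m.$$
Thus $w_m(k\xi_n)=\tbinom km\,y^m$, which is nonzero in $H^m(P^n;\zt)$ exactly when $m\le n$ and $\tbinom km$ is odd. Since Stiefel--Whitney classes are stable invariants (an $a$-fold trivial summand does not change them) and a $d$-plane bundle has vanishing classes in degrees $>d$, any bundle stably equivalent to $k\xi_n$ must have dimension $\ge m$ for each such $m$; taking the largest of these forces $\gd(k\xi_n)\ge m_0$.

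For the equality I would reduce the upper bound to a construction of sections. Producing $k-m_0$ pointwise linearly independent sections of $k\xi_n$ yields a trivial subbundle $(k-m_0)\vareps\subset k\xi_n$, and its $m_0$-plane complement $\theta$ satisfies $k\xi_n\cong(k-m_0)\vareps\oplus\theta$; hence $\theta$ is stably equivalent to $k\xi_n$ and $\gd(k\xi_n)\le\dim\theta=m_0$ (when $k>n$ this is the relation $\gd+\spa=k$ of Proposition \ref{easy}, but the splitting argument needs no such hypothesis). Now a section of $k\xi_n=S^n\times_{\zt}\br^k$ is precisely an odd map $S^n\to\br^k$, and a family of sections is pointwise independent iff the maps are. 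The cleanest source of such a family is a bilinear map $\beta\colon\br^{n+1}\times\br^{k-m_0}\to\br^k$ that is \emph{left nonsingular}, meaning $\beta(x,-)$ is injective for every $x\ne0$: then $s_i(x)=\beta(x,e_i)$ is odd by bilinearity and the $s_i(x)$ are independent for all $x$ by nonsingularity. So the problem reduces to constructing one such $\beta$.

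The construction of $\beta$ is the heart of the matter and the step where the hypothesis $\tbinom{[k/8]}{[n/8]}\equiv1$ enters. The primitive building blocks are the left-nonsingular bilinear maps arising from Clifford module structures (\cite{ABS}), which realize the Radon--Hurwitz counts and are organized by the mod-$8$ periodicity $C_{m+8}\cong C_m\otimes\br(16)$; these handle the residues of $n$ and $k$ modulo $8$ within a single period-$8$ block. The remaining work is to splice such blocks together over the successive powers of $8$ by a Kronecker (tensor) product operation on nonsingular bilinear maps, carried out over the binary digits of $[n/8]$. Each $1$-digit of $[n/8]$ demands a matching $1$-digit of $[k/8]$, and by Lucas' theorem the requirement that every digit of $[n/8]$ be dominated by that of $[k/8]$ is exactly the condition $\tbinom{[k/8]}{[n/8]}\equiv1$. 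The main obstacle is the bookkeeping that the assembled map has target dimension exactly $k$ and domain factor exactly $k-m_0$ — equivalently, that the total number of sections produced matches the drop $k-m_0$ predicted by $m_0=\max\{m\le n:\tbinom km\text{ odd}\}$ — which requires tracking how $m_0$ decomposes under the simultaneous base-$8$ and base-$2$ splitting of $n$ and $k$.
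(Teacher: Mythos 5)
Note first that the paper itself offers no proof of Proposition \ref{Lamprop}: it is quoted verbatim from Lam \cite[Thm 1.1]{Lam}, and the sentence introducing it names exactly the two ingredients you use (``Stiefel-Whitney classes and construction of bilinear maps''), so your outline follows the same route as the cited source. Your first half is complete and correct: $W(k\xi_n)=(1+y)^k$ gives $w_{m_0}(k\xi_n)=\tbinom{k}{m_0}y^{m_0}\ne0$, Stiefel--Whitney classes depend only on the stable class and vanish above the fibre dimension, so any bundle stably equivalent to $k\xi_n$ has dimension at least $m_0$. The reduction of the equality clause to producing $k-m_0$ pointwise independent sections, i.e.\ a left-nonsingular bilinear map $\br^{n+1}\times\br^{k-m_0}\to\br^k$, is also correct.

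The gap is in the second half, and it is genuine: the construction of that bilinear map \emph{is} Lam's theorem, and you explicitly defer it (``the main obstacle is the bookkeeping\dots''), so the equality clause remains unproved. Worse, the splicing operation you propose would not work as stated: the Kronecker product of nonsingular real bilinear maps need not be nonsingular. For instance, tensoring complex multiplication $\bc\times\bc\to\bc$ with itself yields the multiplication of the algebra $\bc\otimes_{\br}\bc\cong\bc\oplus\bc$, which has zero divisors, so nonsingularity is destroyed already in the simplest case. This failure is precisely why Lam's argument cannot treat Clifford-module multiplications as generic nonsingular maps: it must exploit their module-theoretic structure (the identities coming from the $C_m$-action and the period-$8$ isomorphism $C_{m+8}\cong C_m\otimes\br(16)$) in a carefully interlocked block construction, and must then verify the count that the number of sections produced equals $k-m_0$, reconciling $m_0=\max\{m\le n:\tbinom km\text{ odd}\}$ with the digitwise condition encoded by $\tbinom{[k/8]}{[n/8]}$ being odd. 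None of that is carried out in your proposal; as written it is a plausible plan whose central step both is missing and, in its stated form, is false.
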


The following well-known proposition is often useful in determining whether binomial coefficients are odd.
\begin{prop} If $k=2^{e_0}+\cdots+2^{e_t}$ with $e_0<\cdots<e_t$, let $\Bin(k)=\{e_0,\ldots,e_t\}$.
If $k>0$, then $\binom kn$ is odd iff $\Bin(n)\subseteq\Bin(k)$, and $\binom{-k}n$ is odd iff $\Bin(k-1)$ and $\Bin(n)$ are disjoint.
\end{prop}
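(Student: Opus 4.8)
The plan is to reduce both assertions to the behavior of $(1+x)^k$ in the polynomial ring $\zt[x]$, i.e. to a generating-function form of Lucas' theorem. First I would record the Frobenius identity $(1+x)^{2^j}\equiv 1+x^{2^j}$ in $\zt[x]$, which gives, for $k>0$,
$$(1+x)^k\equiv\prod_{j\in\Bin(k)}(1+x^{2^j})\pmod2.$$
Expanding this product, the coefficient of $x^n$ is the number, mod $2$, of subsets $S\subseteq\Bin(k)$ with $\sum_{j\in S}2^j=n$; by uniqueness of binary expansions there is at most one such $S$, and it exists precisely when $\Bin(n)\subseteq\Bin(k)$. Since this coefficient is $\binom kn$ reduced mod $2$, the first assertion follows immediately.

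For the second assertion I would pass to a positive upper index via the standard upper-negation identity $\binom{-k}n=(-1)^n\binom{n+k-1}n$, which reduces the question mod $2$ to $\binom{(k-1)+n}n$ (the boundary case $k=1$, $n=0$ being trivial). Applying the first part with $k$ replaced by $k-1+n$ shows that $\binom{(k-1)+n}n$ is odd iff $\Bin(n)\subseteq\Bin((k-1)+n)$, so it remains to establish the purely combinatorial equivalence
$$\Bin(n)\subseteq\Bin((k-1)+n)\iff \Bin(k-1)\cap\Bin(n)=\emptyset.$$

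The forward implication is easy: if $\Bin(k-1)$ and $\Bin(n)$ are disjoint, the binary addition of $k-1$ and $n$ produces no carries, whence $\Bin((k-1)+n)=\Bin(k-1)\cup\Bin(n)\supseteq\Bin(n)$. The converse is the main obstacle, as it requires controlling carry propagation. I would argue the contrapositive: assuming $\Bin(k-1)\cap\Bin(n)\ne\emptyset$, let $\ell$ be its least element. No index below $\ell$ lies in both supports, so an easy induction shows that the carry entering position $\ell$ in the addition of $k-1$ and $n$ is $0$; since the $\ell$-th binary digits of both $k-1$ and $n$ equal $1$, position $\ell$ contributes $1+1=2$, so the $\ell$-th digit of $(k-1)+n$ is $0$. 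Thus $\ell\in\Bin(n)$ while $\ell\notin\Bin((k-1)+n)$, giving $\Bin(n)\not\subseteq\Bin((k-1)+n)$, as needed. This carry bookkeeping, which is essentially Kummer's theorem in miniature, is the only genuinely delicate point; everything else is the freshman's-dream expansion together with the upper-negation identity.
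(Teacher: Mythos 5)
Your proof is correct, but there is nothing in the paper to compare it against: the author states this proposition as a well-known fact (it is Lucas' theorem together with its negative-upper-index companion) and supplies no proof, using it only as a computational tool. Your argument is a complete and standard way to fill that gap. The first half is the generating-function form of Lucas' theorem: the Frobenius identity $(1+x)^{2^j}\equiv 1+x^{2^j}$ in $\zt[x]$ gives $(1+x)^k\equiv\prod_{j\in\Bin(k)}(1+x^{2^j})$, and uniqueness of binary expansions makes the coefficient of $x^n$ equal to $1$ precisely when $\Bin(n)\subseteq\Bin(k)$. The second half correctly reduces $\binom{-k}{n}$ modulo $2$ to $\binom{(k-1)+n}{n}$ via upper negation (the sign is irrelevant mod $2$, and your handling of the degenerate case $k-1+n=0$ is appropriate, since the first part needs a positive upper index), and your carry analysis is airtight: minimality of $\ell\in\Bin(k-1)\cap\Bin(n)$ forces zero carry into position $\ell$, so the $\ell$-th digit of the sum vanishes while $\ell\in\Bin(n)$. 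One optional streamlining: you could avoid carries entirely by staying in $\zt[[x]]$, writing $(1+x)^{-k}\equiv(1+x)^{2^N-k}\,(1+x^{2^N})^{-1}$ with $2^N>\max(k,n)$, so that $\binom{-k}{n}\equiv\binom{2^N-k}{n}$ for $n<2^N$, and then noting $\Bin(2^N-k)=\{0,\ldots,N-1\}\setminus\Bin(k-1)$ (no borrows occur when subtracting $k-1$ from $2^N-1$); this feeds the negative case directly back into the first half. But that is a matter of taste, and your Kummer-style bookkeeping is equally rigorous.
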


Note that equality occurs in Proposition \ref{Lamprop} if $n\le 7$, and, using \ref{immthm}, we easily obtain the following corollary.
\begin{cor} If $\nbar=(n_1,\ldots,n_r)$ with $n_1\le n_i$ for all $i$, and $n_1\le 7$, then $\imm(P_\nbar)=|\nbar|+n_1-\delta$,
where $\delta$ is given in Table \ref{tab1}.\end{cor}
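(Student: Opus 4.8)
The plan is to specialize Theorem~\ref{immthm}, reduce the geometric dimension appearing there to a \emph{positive} multiple of $\xi_{n_1}$, and then invoke Lam's Proposition~\ref{Lamprop}, which becomes an exact formula precisely because $n_1\le 7$. Write $m=|\nbar|+r$, so that Theorem~\ref{immthm} reads $\imm(P_\nbar)=|\nbar|+\max(\gd(-m\xi_{n_1}),1)$. To handle the negative coefficient I would apply the third bullet of Proposition~\ref{easy}: choosing any $L$ with $\nu(L)\ge\phi(n_1)$ and $L-m>n_1$ (take $L$ a large multiple of $2^{\phi(n_1)}$), we obtain $\gd(-m\xi_{n_1})=\gd((L-m)\xi_{n_1})$, now the geometric dimension of a genuine positive multiple.

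Next I would apply Lam's Proposition~\ref{Lamprop} to $(L-m)\xi_{n_1}$. Since $n_1\le 7$ we have $[n_1/8]=0$, so $\binom{[(L-m)/8]}{[n_1/8]}=\binom{[(L-m)/8]}{0}=1$ is odd; hence Lam's lower bound is sharp and $\gd((L-m)\xi_{n_1})$ equals the largest $\mu\le n_1$ for which $\binom{L-m}{\mu}$ is odd.

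The third step is to verify that this quantity depends only on $m\bmod 2^{\phi(n_1)}$, and in particular not on the auxiliary $L$. The key numerical fact is that $n_1<2^{\phi(n_1)}$ for every $n_1\le 7$, immediate from $\phi(1)=1$, $\phi(2)=\phi(3)=2$, and $\phi(4)=\cdots=\phi(7)=3$. Thus each $\mu\le n_1$ satisfies $\mu<2^{\phi(n_1)}$, so $\Bin(\mu)\subseteq\{0,\ldots,\phi(n_1)-1\}$, and by the $\Bin$ criterion of the proposition following~\ref{Lamprop} the parity of $\binom{L-m}{\mu}$ is determined by the lowest $\phi(n_1)$ binary digits of $L-m$. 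Because $\nu(L)\ge\phi(n_1)$, those digits agree with the lowest $\phi(n_1)$ digits of $-m$; and over $\phi(n_1)$ bits one has $-m\equiv\overline{m-1}$ (two's-complement identity $-m=\overline{m-1}$), so bit $j$ of $L-m$ is set iff bit $j$ of $m-1$ is not. Hence the positive criterion $\Bin(\mu)\subseteq\Bin(L-m)$ agrees with the negative criterion $\Bin(m-1)\cap\Bin(\mu)=\emptyset$, and $\gd(-m\xi_{n_1})=\max\{\mu\le n_1:\ \binom{-m}{\mu}\text{ odd}\}$ is well defined, depending only on $n_1$ and $m\bmod 2^{\phi(n_1)}$.

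Finally, setting $\delta=n_1-\max(\gd(-m\xi_{n_1}),1)$ gives $\imm(P_\nbar)=|\nbar|+n_1-\delta$, and Table~\ref{tab1} is simply the finite record of $\delta$ as $n_1$ runs over $1,\ldots,7$ and $m=|\nbar|+r$ runs over its residues modulo $2^{\phi(n_1)}$ (at most eight residues in each case). I expect the only content beyond the reduction to be this bookkeeping, and the one step that genuinely needs care is the independence argument: confirming $n_1<2^{\phi(n_1)}$ so that the relevant binomial parities are governed by the single residue $m\bmod 2^{\phi(n_1)}$, and checking that the $\max(\cdot,1)$ in Theorem~\ref{immthm} correctly covers the residues for which no positive $\mu\le n_1$ makes $\binom{-m}{\mu}$ odd.
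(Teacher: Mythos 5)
Your proposal is correct and follows essentially the same route as the paper: the paper's entire proof is the remark that equality holds in Proposition \ref{Lamprop} when $n\le 7$ (because $[n_1/8]=0$) combined with Theorem \ref{immthm}, and your steps --- converting $\gd(-m\xi_{n_1})$, $m=|\nbar|+r$, to a positive multiple via the third bullet of Proposition \ref{easy}, invoking Lam's equality case, and checking via the $\Bin$ criterion that the answer depends only on $m\bmod 2^{\phi(n_1)}$ --- are exactly the details the paper leaves implicit.

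One place where your write-up is actually more careful than the paper: you set $\delta=n_1-\max(\gd(-m\xi_{n_1}),1)$, but the printed Table \ref{tab1} records $n_1-\gd(-m\xi_{n_1})$ with no $\max$. The two disagree precisely in the cells whose entry is $n_1$, i.e.\ when $\nu(|\nbar|+r)\ge\phi(n_1)$ so that $\gd(-m\xi_{n_1})=0$: there the corollary as printed asserts $\imm(P_\nbar)=|\nbar|$, which is impossible for a closed $|\nbar|$-manifold (for $\nbar=(1)$ it gives $\imm(S^1)=1$), whereas Theorem \ref{immthm} and your $\delta$ give the correct value $|\nbar|+1$. So your argument is sound and internally consistent; had you tabulated your $\delta$, those cells would read $n_1-1$ rather than $n_1$, and that corrected table is the one the statement should cite. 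This is a defect of the paper's bookkeeping, not of your proof.
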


\medskip
\begin{minipage}{6.5in}
\begin{tab}\label{tab1}{Values of $\delta$}
\begin{center}
\begin{tabular}{cc|cccccccc}
&&&\multicolumn{6}{c}{$|\nbar|+r\pmod8$}\\
&&$1$&$2$&$3$&$4$&$5$&$6$&$7$&$8$\\
\hline
&$0$&$0$&$0$&$0$&$0$&$0$&$0$&$0$&$0$\\
$n_1$&$1$&$0$&$1$&$0$&$1$&$0$&$1$&$0$&$1$\\
&$2$&$0$&$0$&$1$&$2$&$0$&$0$&$1$&$2$\\
&$3$&$0$&$1$&$2$&$3$&$0$&$1$&$2$&$3$\\
&$4$&$0$&$0$&$0$&$0$&$1$&$2$&$3$&$4$\\
&$5$&$0$&$1$&$0$&$1$&$2$&$3$&$4$&$5$\\
&$6$&$0$&$0$&$1$&$2$&$3$&$4$&$5$&$6$\\
&$7$&$0$&$1$&$2$&$3$&$4$&$5$&$6$&$7$
\end{tabular}
\end{center}
\end{tab}
\end{minipage}
\medskip

Combining triviality of $16\xi_8$ and $32\xi_9$ with results in \cite{Lam}, we also have complete information
about $\gd(k\xi_8)$ and $\gd(k\xi_9)$, which we state in Proposition \ref{89}. These, with \ref{immthm}, yield complete
information about $\imm(P_\nbar)$ when $n_1=8$ or 9. It is quite remarkable that whenever the smallest subscript
$n_1$ is $\le 9$, the immersion dimension of $P_\nbar$ is precisely known.
Other results about $\imm(P_\nbar)$ can be obtained by combining \ref{Lamprop} and \ref{immthm}, but we will
not bother to state them.
\begin{prop}\label{89} If $0\le\Delta\le 15$ and $i\ge0$, then $\gd((16i+\Delta)\xi_8)=\min(\Delta,8)$.
If $i\ge0$, then
$$\gd((16i+\Delta)\xi_9)=\begin{cases}\Delta&\text{if $i$ even and $0\le\Delta\le9$, or $i$ odd and $\Delta=6$ or $7$}\\
9&\text{if $\Delta=9$, $11$, $13$, or $15$}\\
8&\text{if $\Delta=8$, $10$, $12$, or $14$}\\
6&\text{if $i$ is odd and $\Delta=0$, $2$, $3$, or $5$}\\
5&\text{if $i$ is odd and $\Delta=1$ or $4$.}
\end{cases}$$
\end{prop}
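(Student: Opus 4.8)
The plan is to reduce both computations to a single period by exploiting the stated triviality results, and then to pin down each residue class by pairing the lower bound of Proposition \ref{Lamprop} against an upper bound coming either from the genuine bundle dimension, from the obstruction-theoretic bound $\gd(\eta)\le\dim X$, or from the equality clause of Proposition \ref{Lamprop}. Since $\gd$ depends only on the stable class of a bundle, triviality of $16\xi_8$ gives $\gd((16i+\Delta)\xi_8)=\gd(\Delta\xi_8)$, and triviality of $32\xi_9$ gives that $\gd((16i+\Delta)\xi_9)$ depends only on $16i+\Delta \bmod 32$; writing $i=2j$ or $i=2j+1$, this is $\gd(\Delta\xi_9)$ when $i$ is even and $\gd((16+\Delta)\xi_9)$ when $i$ is odd. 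Thus it suffices to evaluate $\gd(\Delta\xi_8)$ for $0\le\Delta\le15$ and $\gd(k\xi_9)$ for $0\le k\le 31$.

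For $\xi_8$ the two bounds meet immediately. When $0\le\Delta\le8$ the bundle $\Delta\xi_8$ is a genuine $\Delta$-plane bundle, so $\gd\le\Delta$, while Proposition \ref{Lamprop} gives $\gd\ge m_0=\Delta$ because $\binom{\Delta}{\Delta}$ is odd; hence $\gd(\Delta\xi_8)=\Delta$. When $9\le\Delta\le15$, obstruction theory gives $\gd\le\dim P^8=8$, and since each such $\Delta$ has the $2^3$-bit set, $\binom{\Delta}{8}$ is odd, so $m_0=8$ and Proposition \ref{Lamprop} gives $\gd\ge8$. Therefore $\gd(\Delta\xi_8)=\min(\Delta,8)$, as claimed.

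For $\xi_9$ the same elementary argument handles everything except one range. For $0\le\Delta\le9$ the genuine-bundle bound gives $\gd(\Delta\xi_9)\le\Delta$ and Proposition \ref{Lamprop} gives $\gd\ge m_0=\Delta$, so $\gd=\Delta$. For $k\in\{10,\ldots,15\}$ and for $k\in\{24,\ldots,31\}$ the equality clause of Proposition \ref{Lamprop} applies, since $[k/8]$ equals $1$ or $3$ and both $\binom{1}{1}$ and $\binom{3}{1}$ are odd; a short count of the largest $m\le9$ with $\binom km$ odd (such an $m$ reaches $9=1001_2$ exactly when the $2^0$- and $2^3$-bits of $k$ are set) then yields $\gd=9$ for odd $k$ and $\gd=8$ for even $k$ in both blocks, matching the clauses ``$9$ if $\Delta=9,11,13,15$'' and ``$8$ if $\Delta=8,10,12,14$''.

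The remaining, and hardest, range is $i$ odd with $0\le\Delta\le7$, that is $k\in\{16,\ldots,23\}$, for which $[k/8]=2$, $\binom{2}{1}=2$ is even, and the equality clause of Proposition \ref{Lamprop} fails. Here its lower bound is too weak: one checks $m_0=0,1,2,3,4,5,6,7$ for $k=16,\ldots,23$, whereas the true values are $6,5,6,6,5,6,6,7$. For these eight bundles I would invoke the detailed determination of $\gd(k\xi_9)$ in \cite{Lam}: the upper bounds come from Lam's explicit sections built from bilinear (Clifford-module) maps, and the matching lower bounds for $k\le21$ come from the $KO$-theoretic obstructions used there (for $k=22,23$ the lower bound $m_0$ of Proposition \ref{Lamprop} already suffices). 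I expect this last block to be the crux of the proof, as it is the only place where Proposition \ref{Lamprop} does not settle the answer and one must import the full strength of \cite{Lam}; the rest is the bookkeeping of aligning each residue of $16i+\Delta$ modulo $32$ with the correct clause of the stated formula.
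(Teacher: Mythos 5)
Your proposal is correct and takes essentially the same route as the paper: the paper offers no formal proof of Proposition \ref{89} beyond the one-sentence remark that it follows by combining the triviality of $16\xi_8$ and $32\xi_9$ (hence periodicity of $\gd$ mod $16$, resp.\ mod $32$) with the results of \cite{Lam}. Your write-up is a faithful elaboration of exactly that argument---settling most residues by playing the lower bound of Proposition \ref{Lamprop} against the upper bounds $\gd\le(\text{bundle dimension})$ and $\gd\le\dim P^n$, and deferring to Lam's detailed computations only for the residues $16,\ldots,23$ mod $32$, where Proposition \ref{Lamprop} alone is inconclusive, just as the paper implicitly does.
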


The implications of Proposition \ref{Lamprop} for $\spa(P_\nbar)$ are limited by the span-versus-stablespan conundrum.
We readily obtain the following result about stable span.
\begin{prop} Let $\nbar=(n_1,\ldots,n_r)$ with $n_1\le n_i$ for all $i$. Then $\ssp(P_\nbar)\le|\nbar|-m_1$, where $m_1$ is the largest $m\le n_1$
such that $\binom{|\nbar|+r}m$ is odd. Equality is obtained if $\binom{[(|\nbar|+r)/8]}{[n_1/8]}$ is odd. If $n_1\le7$, then
$\ssp(P_\nbar)=|\nbar|-n_1+\delta$, where $\delta$ is as in Table \ref{tab1} with column $i$ replaced by $8-i$; i.e., the column labels
read $7,\ldots,0$.\label{ssprop}\end{prop}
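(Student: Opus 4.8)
The plan is to combine the relation $\ssp(P_\nbar)=\spa((|\nbar|+r)\xi_{n_1})-r$ from Theorem~\ref{spanthm}(1) with the general principle in Proposition~\ref{easy} relating span and geometric dimension, and then feed in Lam's lower bound on $\gd(k\xi_n)$ from Proposition~\ref{Lamprop}. Concretely, writing $k=|\nbar|+r$ and $d=k$ (the fiber dimension of $k\xi_{n_1}$ over $P^{n_1}$), the second bullet of Proposition~\ref{easy} gives $\gd(k\xi_{n_1})+\spa(k\xi_{n_1})=k$ whenever $k>\dim(P^{n_1})=n_1$, which always holds here since $k=|\nbar|+r\ge n_1+r>n_1$. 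Hence $\spa(k\xi_{n_1})=k-\gd(k\xi_{n_1})$, and subtracting $r$ yields
$$\ssp(P_\nbar)=k-r-\gd(k\xi_{n_1})=|\nbar|-\gd((|\nbar|+r)\xi_{n_1}).$$

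Next I would apply Proposition~\ref{Lamprop} with $n=n_1$ and this value of $k$. That proposition asserts $\gd(k\xi_{n_1})\ge m_1$, where $m_1$ is the largest $m\le n_1$ with $\binom{k}{m}$ odd, with equality when $\binom{[k/8]}{[n_1/8]}$ is odd. Substituting the lower bound $\gd\ge m_1$ into the displayed formula immediately gives the inequality $\ssp(P_\nbar)\le|\nbar|-m_1$, and the equality clause follows verbatim from the equality clause of Proposition~\ref{Lamprop}: when $\binom{[(|\nbar|+r)/8]}{[n_1/8]}$ is odd we get $\gd(k\xi_{n_1})=m_1$, hence $\ssp(P_\nbar)=|\nbar|-m_1$.

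For the final sentence concerning $n_1\le 7$, I would observe that for $n_1\le 7$ we have $[n_1/8]=0$, so $\binom{[(|\nbar|+r)/8]}{[n_1/8]}=\binom{[(|\nbar|+r)/8]}{0}=1$ is always odd; thus the equality case always holds and $\ssp(P_\nbar)=|\nbar|-m_1$ with $m_1=\gd((|\nbar|+r)\xi_{n_1})$. It then remains to identify $m_1$ with $n_1-\delta$ for the modified $\delta$. The cleanest route is to compare with the immersion corollary (the Corollary following Table~\ref{tab1}), which states $\imm(P_\nbar)=|\nbar|+n_1-\delta$, i.e.\ $\gd(-(|\nbar|+r)\xi_{n_1})=n_1-\delta$. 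Using the third bullet of Proposition~\ref{easy}, which (under $\nu(L)\ge\phi(n)$ and $L-m>n$) equates $\gd(-m\xi_n)$ with $L-m-\spa((L-m)\xi_n)$ and hence links positive and negative multiples, one sees that the $\delta$ for $\gd((|\nbar|+r)\xi_{n_1})$ is obtained from the $\delta$ for $\gd(-(|\nbar|+r)\xi_{n_1})$ by a reflection in the residue of $|\nbar|+r$ mod~$8$, which is exactly the stated relabeling of columns $i\mapsto 8-i$.

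The main obstacle I anticipate is the last bookkeeping step: verifying that the table of $\delta$-values for positive multiples is the column-reversal of the table for negative multiples. This amounts to checking, for $n_1\le 7$ and each residue class of $k=|\nbar|+r$ mod~$8$, that the largest $m\le n_1$ with $\binom{k}{m}$ odd corresponds under $k\mapsto -k$ (equivalently $k\bmod 8\mapsto (8-k)\bmod 8$) to the largest $m\le n_1$ with $\binom{-k}{m}$ odd; this can be done directly using the $\Bin$ criterion of the Proposition following Proposition~\ref{Lamprop} (that $\binom{k}{m}$ is odd iff $\Bin(m)\subseteq\Bin(k)$, while $\binom{-k}{m}$ is odd iff $\Bin(k-1)$ and $\Bin(m)$ are disjoint). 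The reflection symmetry in the low residues mod~$8$ is then a short finite verification rather than anything conceptual, so the bulk of the argument reduces to the clean substitutions in the first two paragraphs.
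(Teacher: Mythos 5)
Your proposal is correct and follows exactly the route the paper intends (the paper offers no explicit proof, saying the result is "readily obtained" from Lam's Proposition \ref{Lamprop} together with Theorem \ref{spanthm}(1) and Proposition \ref{easy}): you convert $\ssp(P_\nbar)=\spa((|\nbar|+r)\xi_{n_1})-r$ into $|\nbar|-\gd((|\nbar|+r)\xi_{n_1})$ and feed in Lam's bound, with the hypothesis $d>\dim X$ of Proposition \ref{easy} checked correctly. Your final bookkeeping claim is also sound: for $m\le 7$ the parity of $\binom km$ depends only on $k\bmod 8$ via $\Bin(m)\subseteq\Bin(k)$, and replacing $k$ by $-k$ complements the low three bits (since $\binom{-k}m$ is odd iff $\Bin(k-1)\cap\Bin(m)=\emptyset$), which is precisely the column reversal $i\mapsto 8-i$ of Table \ref{tab1}.
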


Combining Theorem \ref{spanthm} with Proposition \ref{ssprop} yields results about $\spa(P_\nbar)$. However,
even if $n_1\le7$, we must be careful about trying to assert $\spa(P_\nbar)=\ssp(P_\nbar)$ because of the situation described in part
(2) of Theorem \ref{spanthm}.

The second type of geometric dimension result on which we focus is vector bundles of low geometric dimension.
These were first studied by Adams in \cite{Ad}, and Lam and Randall provide the current status in \cite{LR},
some of which is described in the following theorem.
\begin{thm}\label{ALR} $($\cite{Ad},\cite{LR}$)$ Assume $n\ge18$.
\begin{itemize}
\item If $0\le d\le 4$, then $\gd(k\xi_n)=d$ if and only if
$k\equiv d$ mod $2^{\phi(n)}$.
\item If $\nu(k)=\phi(n)-1$  and $n\not\equiv7$ mod $8$ or if $\nu(k)=\phi(n)-2$
and $n\equiv2$ or $4$ mod $8$, then $\gd(k\xi_n)=5$.
\item The only other possible occurrences of $\gd(k\xi_n)=5$ for $k\equiv0$ mod $4$ are $(\nu(k)=\phi(n)-1$ and $n\equiv7$ mod $8)$ or $(\nu(k)=\phi(n)-2$
and $n\equiv1,3,5$ mod $8)$.
\end{itemize}
\end{thm}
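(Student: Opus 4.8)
The plan is to treat this as a statement about the reducibility of the stable bundle $k\xi_n$ to a genuine bundle of small fiber dimension, and to attack the upper and lower bounds on $\gd(k\xi_n)$ separately. Since this is the generalized vector field problem, whose complete solution in these ranges is exactly the content of \cite{Ad} and \cite{LR}, I would organize everything around the module $\widetilde{KO}(P^n)\cong\Z/2^{\phi(n)}$, generated by $\mu=\xi_n-1$ with $\mu^2=-2\mu$. Geometric dimension is the lifting problem for the classifying map of $k\xi_n$ through $BO(d)\to BO$, so the whole question is governed by the homotopy of the Stiefel fibers and by $KO$-theory.

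For the upper bounds, the basic observation is that $d\xi_n$ is an honest $d$-plane bundle, and that $k\xi_n$ and $d\xi_n$ are stably equivalent as soon as $(k-d)\mu=0$ in $\widetilde{KO}(P^n)$, i.e. $k\equiv d\pmod{2^{\phi(n)}}$. This immediately gives $\gd(k\xi_n)\le d$ in the first bullet and reduces its ``only if'' direction to a lower bound. The subtler upper bounds needed for the $\gd=5$ cases in the second bullet come from explicit low-dimensional constructions: Clifford-module bundles in the style of Adams' vector-fields theorem, together with James periodicity of the stunted spaces $P_m^k$, which lets one transport a reduction obtained in low skeleta up the skeletal filtration.

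For the lower bounds I would proceed in two layers. The primary obstruction is cohomological: since $w_i(k\xi_n)=\binom{k}{i}y^i$ with $y$ the generator of $H^1(P^n;\zt)$, non-vanishing of $w_i$ forces $\gd(k\xi_n)\ge i$, which is precisely Lam's bound in Proposition \ref{Lamprop} and disposes of many cases via the $\Bin$-criterion. To obtain the sharp dichotomy between $\gd=4$ and $\gd=5$, and to rule out the spurious occurrences excluded in the third bullet, one must pass to $KO$-theory and compute secondary obstructions, detected by the Adams operation $\psi^3$ (equivalently the $e$-invariant) acting on $\widetilde{KO}(P^n)$ and on the $\widetilde{KO}(P_m^k)$, whose structure is known. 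Matching the $2$-adic valuation $\nu(k)$ against $\phi(n)$ and the residue of $n$ mod $8$ is exactly what produces the case divisions recorded in the statement.

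The main obstacle is the lower-bound analysis in the borderline cases: certifying that $\gd(k\xi_n)$ is genuinely $5$ rather than $\le 4$ when $\nu(k)=\phi(n)-1$ or $\phi(n)-2$, and conversely that no further $\gd=5$ cases arise beyond those listed. The cohomological obstructions are insufficient here, so the argument rests on the finer $KO$-theoretic and $e$-invariant computations of \cite{Ad} and \cite{LR}, together with the resolution of the real Adams conjecture identifying the image of $J$. This is where the full depth of the cited results enters, and in this paper I would simply invoke those computations rather than reproduce them.
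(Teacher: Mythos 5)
This theorem is stated in the paper purely as a quotation from \cite{Ad} and \cite{LR}---no proof is given---and your proposal likewise bottoms out in invoking exactly those references' computations, so the two ``proofs'' are essentially the same: an appeal to the literature. Your sketch of what lies inside those citations (the order-$2^{\phi(n)}$ group $\widetilde{KO}(P^n)$ giving the upper bound $\gd(k\xi_n)\le d$ when $k\equiv d$ mod $2^{\phi(n)}$, Stiefel--Whitney/Lam lower bounds, and Adams-operation obstructions for the borderline $\gd=5$ cases) is a fair and accurate description of the methods there.
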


This has the following immediate consequence for $\imm(P_\nbar)$.
\begin{cor} Let $\nbar=(n_1,\ldots,n_r)$ with $n_1\le n_i$ for all $i$.
\begin{itemize}
\item $\imm(P_\nbar)=|\nbar|+1$ iff $\nu(|\nbar|+r)\ge\phi(n_1)$ or $\nu(|\nbar|+r+1)\ge\phi(n_1)$.
\item For $2\le d\le 4$, $\imm(P_\nbar)=|\nbar|+d$ iff $\nu(|\nbar|+r+d)\ge\phi(n_1)$.
\item If $\nu(|\nbar|+r)=\phi(n_1)-1$ and $n_1\not\equiv7$ mod $8$, or if $\nu(|\nbar|+r)=\phi(n_1)-2$ and
$n_1\equiv2,4$ mod $8$, then $\imm(P_\nbar)=|\nbar|+5$.
\end{itemize}
\end{cor}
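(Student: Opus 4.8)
The plan is to read off $\imm(P_\nbar)$ from Theorem \ref{immthm}, which gives $\imm(P_\nbar)=|\nbar|+\max(\gd(-(|\nbar|+r)\xi_{n_1}),1)$, and then to evaluate the geometric dimension $\gd(-(|\nbar|+r)\xi_{n_1})$ using Theorem \ref{ALR}. Write $k=|\nbar|+r$ and $n=n_1$, and note that since the corollary invokes Theorem \ref{ALR} we are in the range $n\ge18$. Because Theorem \ref{ALR} is phrased for positive multiples of $\xi_n$, the first step is to replace the negative multiple $-k\xi_n$ by a large positive one: choosing $L=2^{\phi(n)}N$ with $N$ large enough that $L-k>n$, Proposition \ref{easy} gives $\gd(-k\xi_n)=\gd((L-k)\xi_n)$, a positive multiple to which Theorem \ref{ALR} applies directly.

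The next step is bookkeeping on $2$-adic valuations and residues modulo $2^{\phi(n)}$. Since $L\equiv0\pmod{2^{\phi(n)}}$, we have $L-k\equiv-k\pmod{2^{\phi(n)}}$, so for each $d$ the congruence $L-k\equiv d\pmod{2^{\phi(n)}}$ is equivalent to $2^{\phi(n)}\mid(k+d)$, i.e. to $\nu(k+d)\ge\phi(n)$. Likewise, when $\nu(k)<\phi(n)$ the identity $\nu(L-k)=\min(\nu(L),\nu(k))=\nu(k)$ shows that hypotheses of Theorem \ref{ALR} of the form $\nu(\,\cdot\,)=\phi(n)-1$ or $\phi(n)-2$ on the positive multiple $L-k$ translate verbatim into the same conditions on $k$. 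The residue-class conditions modulo $8$ involve $n=n_1$ directly and need no translation.

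With these translations in hand, the three bullets follow by specializing Theorem \ref{ALR}. For $2\le d\le4$ one has $\max(\gd(-k\xi_n),1)=d$ iff $\gd(-k\xi_n)=d$, and the first part of Theorem \ref{ALR} gives this iff $L-k\equiv d\pmod{2^{\phi(n)}}$, i.e. $\nu(|\nbar|+r+d)\ge\phi(n_1)$, yielding the second bullet. For the first bullet, $\imm=|\nbar|+1$ iff $\gd(-k\xi_n)\le1$, and since $\gd(-k\xi_n)\in\{0,1\}$ corresponds to $L-k\equiv0$ or $1\pmod{2^{\phi(n)}}$, this is exactly $\nu(|\nbar|+r)\ge\phi(n_1)$ or $\nu(|\nbar|+r+1)\ge\phi(n_1)$. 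For the third bullet, $\gd(-k\xi_n)=5$ follows from the second part of Theorem \ref{ALR} under the stated valuation-and-residue hypotheses, giving $\imm=|\nbar|+5$.

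I expect the only real subtlety to be the reduction from the negative multiple $-k\xi_n$ to a positive one, together with the accompanying valuation translation $\nu(L-k)=\nu(k)$, valid precisely because $\nu(k)<\phi(n)$ in the relevant cases; everything else is direct substitution of Theorem \ref{ALR} into Theorem \ref{immthm}. A minor point to watch is the $\max(\,\cdot\,,1)$ in Theorem \ref{immthm}, which is what merges the two valuation conditions into the single ``or'' in the first bullet, since both $\gd=0$ and $\gd=1$ produce immersion dimension $|\nbar|+1$.
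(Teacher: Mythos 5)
Your proposal is correct and follows exactly the route the paper intends: the paper states this corollary as an immediate consequence of Theorem \ref{immthm} combined with Theorem \ref{ALR}, which is precisely your substitution, and your use of Proposition \ref{easy} to replace $-(|\nbar|+r)\xi_{n_1}$ by a positive multiple $(L-k)\xi_{n_1}$ with $\nu(L)\ge\phi(n_1)$, together with the congruence and valuation translations $L-k\equiv -k$ and $\nu(L-k)=\nu(k)$, is just the routine bookkeeping the paper leaves unstated. Your handling of the $\max(\gd,1)$ merging the $d=0,1$ cases in the first bullet matches the intended reading as well.
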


Results such as
$$\text{``if }0\le d\le4,\text{ then } \ssp(P_\nbar)=|\nbar|-d\text{ iff }\nu(|\nbar|+r-d)\ge\phi(n_1)\text{"}$$
can also be immediately read off from \ref{spanthm} and \ref{ALR}.

Next we recall the implications of Adams operations in $K$-theory for sectioning $k\xi_n$. Although slightly
stronger results can be obtained using $KO$-theory, we prefer here the following simpler-to-state $KU$ result.
\begin{thm}\label{Kgd}$($\cite{thesis}$)$ If $\binom{m-1}n$ is odd, then $m\xi_n$ has at most $m-n+2\nu(m)+1$
l.i. sections.\end{thm}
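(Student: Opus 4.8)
The plan is to convert the statement about sections into a lower bound on a geometric dimension and then detect that bound with Adams operations in complex $K$-theory. Since $m\xi_n$ is $m$-dimensional over the $n$-dimensional complex $P^n$ and the bound is vacuous unless $m>n$, I may assume $m>n$ and invoke Proposition \ref{easy} to get $\spa(m\xi_n)=m-\gd(m\xi_n)$; thus the assertion that $m\xi_n$ has at most $m-n+2\nu(m)+1$ l.i. sections is equivalent to $\gd(m\xi_n)\ge n-2\nu(m)-1$. I would therefore suppose $m\xi_n\cong\eta\oplus s\eps$ with $\eta$ a genuine real $q$-plane bundle, $q=m-s$, and aim to show $q\ge n-2\nu(m)-1$. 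Complexifying, $\eta_\bc$ is an honest complex $q$-bundle whose reduced class in $\tilde K(P^n)$ is $mg$, where $g=(\xi_n)_\bc-1$. The point of complexifying is to gain access to the Adams operations $\psi^k$, at the cost of a factor of two relating complex dimension to real cells.

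Next I would exploit the very simple structure of $\tilde K(P^n)=\bz/2^{\lfloor n/2\rfloor}\langle g\rangle$, namely $g^2=-2g$ (so $g^i=(-2)^{i-1}g$), together with $\psi^k(g)=g$ for $k$ odd and $\psi^k(g)=0$ for $k$ even. A genuine complex $q$-bundle has vanishing $\gamma$-operations above dimension $q$, and one detects the last nonvanishing such operation on the virtual class $mg$ by applying $\psi^k$ with $k=3$, a topological generator of the $2$-adic units. The resulting divisibility obstruction is governed by the $2$-adic valuation of $3^{\,am}-1$ for the relevant $a$; by the lifting-the-exponent identity $\nu(3^{t}-1)=\nu(t)+2$ for even $t$, this is exactly where the $\nu(m)$ of the bound is produced, while the overall factor of $2$ in $2\nu(m)$ reflects that each unit of $2$-adic divisibility in $\tilde K(P^n)$ costs two real cells. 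Carrying out this computation bounds how far the class $mg$ can be compressed, i.e. bounds $q$ from below by $n-2\nu(m)-1$.

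The hypothesis that $\binom{m-1}{n}$ is odd should enter precisely to guarantee that the obstruction class is nonzero all the way up to dimension $n$: it controls the order of the top generator (it is the binomial coefficient governing whether the $n$-cell survives in the relevant truncation), so that the $\psi^3$-argument of the previous paragraph is sharp rather than vacuous. The main obstacle, I expect, is the careful $2$-adic bookkeeping: one must identify exactly which $\psi^3$-expression is the last nonzero one, verify that its valuation is controlled by $\nu(m)$ rather than by a coarser estimate, and check that passing from $\eta_\bc$ back to the self-conjugate real bundle $\eta$ does not lose a factor of two. A secondary technical point is to justify rigorously the implication ``$s$ sections $\Rightarrow$ the complex $\gamma$-dimension of $mg$ is at most $m-s$,'' which is the step where the sectioning problem is actually reduced to the pure $K$-theory computation above.
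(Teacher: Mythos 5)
The paper itself contains no proof of Theorem \ref{Kgd}: it is quoted from \cite{thesis}, and the surrounding text tells you only that it is an ``Adams operations in $KU$'' result. So you have guessed the right toolbox, but your plan has a genuine gap, and it is not merely one of bookkeeping: the computation you propose, carried out on the space you propose to carry it out on, provably cannot reach the stated bound. In $\widetilde K(P^n)\cong\bz/2^{\lfloor n/2\rfloor}$, generated by $g$ with $g^2=-2g$, one has $\gamma^i(mg)=\binom{m}{i}g^i=\binom{m}{i}(-2)^{i-1}g$, so $\gamma^i(mg)=0$ for \emph{every} $i>\lfloor n/2\rfloor$, for $2$-adic reasons and regardless of any hypothesis on $m$. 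Consequently the principle ``an honest $q$-plane bundle has vanishing $\gamma^i$ for $i>q$'' can never force $q\ge n-2\nu(m)-1$; it caps out at $q\ge\lfloor n/2\rfloor$, i.e.\ at most $m-\lfloor n/2\rfloor$ sections, which is far weaker than $m-n+2\nu(m)+1$ when $\nu(m)$ is small (for $m$ odd the theorem asserts at most $m-n+1$ sections). Note also that the step you flag as a ``secondary technical point'' is in fact immediate (just complexify the splitting); the fatal problem is the opposite one, that this reduction loses too much. Relatedly, your appeal to $\nu(3^t-1)=\nu(t)+2$ is inconsistent with working in $\widetilde K(P^n)$: there $\psi^3(g)=g$, so $\psi^3$ acts as the identity and no quantities of the form $3^t-1$ can ever appear. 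Powers of $3$ arise only when cells in \emph{different} dimensions are compared, because $\psi^3$ commutes with Bott periodicity only up to a factor of $3$.

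That last observation points to the argument that actually works (Adams' vector-field method, which is what \cite{thesis} adapts). From $m\xi_n\cong\eta\oplus s\vareps$ with $\dim\eta=q=m-s$ one passes to Thom spaces: $P_m^{m+n}=T(m\xi_n)\simeq\Sigma^s T(\eta)$, exhibiting the stunted projective space as an $s$-fold suspension of a complex with cells in dimensions $q$ through $q+n$. Applying $\psi^3$ to this splitting (equivalently, running an $e$-invariant or $S$-duality/coreducibility argument) is where the arithmetic you anticipated really lives: the interaction of $\psi^3$ with the suspension/Bott periodicity produces the congruences involving $\nu(3^{2t}-1)=\nu(t)+3$, each factor of $2$ in $m$ buys exactly two more cells of compression (whence $2\nu(m)$), and the hypothesis that $\binom{m-1}{n}$ is odd is what guarantees that the bottom-cell generator of $\widetilde K(P_m^{m+n})$ has maximal order---so the obstruction survives all the way to the top dimension rather than dying in a quotient. (A minor remark on your opening reduction: you need not treat $m>n$ as an extra assumption, since $\binom{m-1}{n}$ odd already forces $m-1\ge n$.) Without moving from $P^n$ to the stunted projective space, no amount of careful $2$-adic bookkeeping will close this gap.
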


The implication of this for $P_\nbar$ is given in the following result, which is immediate from \ref{immthm}, \ref{spanthm}, and \ref{Kgd}
\begin{cor} Let $\nbar=(n_1,\ldots,n_r)$ with $n_1\le n_i$ for all $i$.
\begin{itemize}
\item If $\binom{-|\nbar|-r-1}{n_1}$ is odd, then $\imm(P_\nbar)\ge|\nbar|+n_1-2\nu(|\nbar|+r)-1$.
\item If $\binom{|\nbar|+r-1}{n_1}$ is odd, then $\spa(P_\nbar)\le|\nbar|-n_1+2\nu(|\nbar|+r)+1$.
\end{itemize}
\end{cor}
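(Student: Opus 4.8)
The plan is to obtain both inequalities by feeding the $K$-theoretic sectioning estimate of \ref{Kgd} into the bundle-theoretic descriptions of $\imm(P_\nbar)$ and $\ssp(P_\nbar)$ supplied by \ref{immthm} and \ref{spanthm}(1). Throughout I write $M=|\nbar|+r$. The span bound is the more direct of the two: by \ref{spanthm}(1) together with $\spa(P_\nbar)\le\ssp(P_\nbar)$,
$$\spa(P_\nbar)\le\ssp(P_\nbar)=\spa(M\xi_{n_1})-r.$$
Under the hypothesis that $\binom{M-1}{n_1}=\binom{|\nbar|+r-1}{n_1}$ is odd, \ref{Kgd} gives $\spa(M\xi_{n_1})\le M-n_1+2\nu(M)+1$, and subtracting $r$ yields exactly $\spa(P_\nbar)\le|\nbar|-n_1+2\nu(|\nbar|+r)+1$.

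For the immersion bound, \ref{immthm} reduces the claim to showing $\gd(-M\xi_{n_1})\ge n_1-2\nu(M)-1$, since $\imm(P_\nbar)=|\nbar|+\max(\gd(-M\xi_{n_1}),1)\ge|\nbar|+\gd(-M\xi_{n_1})$. To put myself in a position to apply \ref{Kgd} (which concerns a positive multiple), I would invoke the third bullet of \ref{easy}: choose $L$ with $\nu(L)>\nu(M)$, $\nu(L)\ge\phi(n_1)$, and $L-M>n_1$. Then
$$\gd(-M\xi_{n_1})=(L-M)-\spa((L-M)\xi_{n_1}),$$
so an upper bound on $\spa((L-M)\xi_{n_1})$ is exactly a lower bound on $\gd(-M\xi_{n_1})$. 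Applying \ref{Kgd} to the multiple $L-M$ and using $\nu(L-M)=\nu(M)$ (valid because $\nu(L)>\nu(M)$) gives $\spa((L-M)\xi_{n_1})\le(L-M)-n_1+2\nu(M)+1$, whence $\gd(-M\xi_{n_1})\ge n_1-2\nu(M)-1$, as required.

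The step needing the most care — and the one place where the hypothesis on $\binom{-M-1}{n_1}$ enters — is checking that \ref{Kgd} actually applies to $(L-M)\xi_{n_1}$, that is, that $\binom{L-M-1}{n_1}$ is odd. Here I would use the $\Bin$-criterion: since $n_1<2^{\phi(n_1)}$, the parity of $\binom{a}{n_1}$ depends only on the binary digits of $a$ below position $\phi(n_1)$, and for $L\equiv0\pmod{2^{\phi(n_1)}}$ those digits of $L-M-1$ agree with the $2$-adic digits of $-M-1$. Concretely, taking $L=2^N$ with $N$ large, the low $\phi(n_1)$ bits of $L-M-1=(2^N-1)-M$ are the complement of those of $M$, so $\Bin(n_1)\subseteq\Bin(L-M-1)$ precisely when $\Bin(n_1)\cap\Bin(M)=\emptyset$; and this last condition is exactly the assumption that $\binom{-M-1}{n_1}=\binom{-|\nbar|-r-1}{n_1}$ is odd. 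This bit-level bookkeeping, together with the valuation identity $\nu(L-M)=\nu(M)$, is the only subtle point; every other step is a direct substitution into the results already in hand.
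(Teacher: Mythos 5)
Your proposal is correct and is exactly the argument the paper intends: the paper states this corollary as immediate from \ref{immthm}, \ref{spanthm}, and \ref{Kgd}, and your derivation just fills in the routine details (using \ref{spanthm}(1) with $\spa\le\ssp$ for the span bound, and Proposition \ref{easy} with $L=2^N$, $\nu(L-M)=\nu(M)$, and the $\Bin$-criterion to convert \ref{Kgd} into the lower bound on $\gd(-(|\nbar|+r)\xi_{n_1})$ needed for \ref{immthm}). All of your bookkeeping checks out, including the identification of $\binom{L-M-1}{n_1}$ odd with $\binom{-|\nbar|-r-1}{n_1}$ odd.
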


Finally, we recall the strong implications of $BP$-theory for sectioning $k\xi_n$.
Slightly stronger results have been recently obtained using $tmf$ (\cite{tmf}) or $ER(2)$ (\cite{KW}),
but we list here the $BP$-result because it is much simpler to state.
\begin{thm} $($\cite{Annals}$)$ If $\nu\binom{n+s}{k-s}=s$, then $\gd(2n\xi_{2k})\ge 2k-6s$.
\end{thm}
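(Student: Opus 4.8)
The plan is to realize the bound $\gd(2n\xi_{2k})\ge 2k-6s$ as the nonvanishing of a primary obstruction in Brown--Peterson cohomology, exploiting that $2n\xi_{2k}$ is complex-orientable: since $\xi_{2k}\oplus\xi_{2k}$ carries a complex structure, $2n\xi_{2k}$ is $n$ copies of a realified complex bundle, so the complex-oriented theory $BP$ applies and furnishes $BP$-characteristic classes. Recall that $\gd(2n\xi_{2k})<2k-6s$ is equivalent to a lift of the stable classifying map $P^{2k}\to BO$ through $BO(2k-6s-1)\to BO$. I would assume such a lift exists and derive a contradiction by evaluating a primary $BP$-obstruction, the degree $6=|v_2|$ of the second Brown--Peterson generator (at the prime $2$) being precisely what produces the coefficient in $6s$.

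First I would convert the lifting problem into a detection statement over a Thom spectrum: by the Thom isomorphism for $\xi_{2k}$ together with $S$-duality, the reducibility of $2n\xi_{2k}$ to geometric dimension $2k-6s-1$ is governed by the $BP$-cohomology of a stunted projective space $P_a^{2k}$, with $a$ fixed by the target dimension $2k-6s$, and by the image there of the $BP$-characteristic class of $2n\xi_{2k}$. The computation rests on the fact that $BP^*(P^m)$ is cyclic over $BP^*$ in each relevant degree, with the $2$-adic orders of its generators controlled by $\phi$ and with the operations $v_1,v_2,\dots$ acting in the standard way; this is the structural input I would take for granted.

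The heart of the argument is then the arithmetic. I would express the relevant obstruction as the coefficient of the $v_2^s$-multiple of a bottom class and show that, up to a $2$-adic unit, this coefficient is $\binom{n+s}{k-s}$ times an explicit power of $2$. The hypothesis $\nu\binom{n+s}{k-s}=s$ is then exactly the condition guaranteeing that this coefficient has low enough $2$-adic valuation to be nonzero in the cyclic $BP$-module in question, so that the $v_2^s$-detected obstruction survives; the integer $s$ counts the number of times the degree-$6$ operation $v_2$ can be applied before the binomial coefficient forces nonvanishing. Matching the degree shift $6s$ against the power of $v_2$ and the $BP^*(P^{2k})$ module structure is where the combinatorics of the theorem is consumed.

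The main obstacle I expect is not this valuation count but the homotopy-theoretic bookkeeping needed to make \emph{detection} rigorous: one must verify that the primary $BP$-obstruction genuinely obstructs the lift, i.e.\ that it is not annihilated by higher Adams--Novikov (or $BP$-based Atiyah--Hirzebruch) differentials, nor by the indeterminacy coming from the lower cells of $P^{2k}$. Confirming that the $v_2^s$-class lies in the image of the $e$-invariant-type map out of the geometric-dimension fiber, and that nothing in the metastable range kills it, is the delicate part; the final binomial-coefficient computation is comparatively routine once this detection is in place.
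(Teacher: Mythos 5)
The first thing to say is that the paper does not prove this statement: it is quoted, with the citation \cite{Annals}, as a known input from the literature (the author's own 1984 Annals paper), and the present paper only uses it to read off consequences for $\imm(P_\nbar)$ and $\spa(P_\nbar)$. So there is no proof in the paper to compare yours against; what you have attempted is to reconstruct the proof of the cited Annals theorem itself, which is the content of an entire separate paper rather than of any argument given here.

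Judged on those terms, your outline points in the right general direction --- the cited proof is indeed a $BP$-theoretic obstruction argument, the realification observation $2n\xi_{2k}=r\bigl(n\,c(\xi_{2k})\bigr)$ is the correct reason complex-oriented machinery applies, the $6s$ does reflect $s$-fold multiplication by a class of degree $6=|v_2|$ at $p=2$, and the hypothesis $\nu\binom{n+s}{k-s}=s$ enters as the $2$-adic valuation of the coefficient of the relevant $v_2^s$-term --- but as a proof it has genuine gaps, which you yourself flag. Concretely: (i) the reduction of ``$\gd(2n\xi_{2k})\le 2k-6s-1$'' to a vanishing or divisibility statement in $BP^*$ of a specific stunted projective space is asserted only via ``the Thom isomorphism together with $S$-duality,'' with the space $P_a^{2k}$, the class, and the precise equivalence left unspecified; making a primary obstruction exist and be well defined here is itself nontrivial; (ii) the coefficient computation --- that the obstruction is, up to a $2$-adic unit, $\binom{n+s}{k-s}$ times an explicit power of $2$ --- requires the formal-group-law expansion of the $BP$ Euler class of the complexified bundle, and your sketch does not perform it; (iii) most importantly, you concede that the detection step (that the $v_2^s$-class is not killed by higher differentials or by indeterminacy from lower cells) is ``the delicate part'' and leave it open. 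Items (i)--(iii) are precisely the mathematical content of \cite{Annals}; as written, the proposal is a plausible research plan, not a proof. For the purposes of this paper the correct move is the one the author makes: cite the result.
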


In applying this, it is useful to note that $\nu\binom{\ell+m}\ell=\a(\ell)+\a(m)-\a(\ell+m)$, where
$\a(m)$ is the number of 1's in the binary expansion of $m$. This implies that $\nu\binom{2\ell+2m}{2\ell}=\nu\binom
{\ell+m}{\ell}$, which we will use in the next result. The implications for $P_\nbar$ are as follows, derived
in the usual way.
\begin{cor} Assume $|\nbar|+r$ and $n_1$ are even. Then
\begin{itemize}
\item If $\nu\binom{-|\nbar|-r+2s}{n_1-2s}=s$, then $\imm(P_\nbar)\ge |\nbar|+n_1-6s$.
\item If $\nu\binom{|\nbar|+r+2s}{n_1-2s}=s$, then $\spa(P_\nbar)\le|\nbar|-n_1+6s$.
\end{itemize}
\end{cor}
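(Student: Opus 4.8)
The plan is to reduce both inequalities to the $BP$ geometric-dimension theorem just stated, converting its binomial hypothesis by means of the doubling identity $\nu\binom{2\ell+2m}{2\ell}=\nu\binom{\ell+m}{\ell}$ recorded above. Throughout, set $2k=n_1$ (possible since $n_1$ is even), so $k=n_1/2$.

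For the span bound I would begin with Theorem \ref{spanthm}(1), namely $\ssp(P_\nbar)=\spa((|\nbar|+r)\xi_{n_1})-r$. Since $|\nbar|+r>n_1=\dim P^{n_1}$, the middle bullet of Proposition \ref{easy} applies to the $(|\nbar|+r)$-dimensional bundle $(|\nbar|+r)\xi_{n_1}$ and gives $\spa((|\nbar|+r)\xi_{n_1})=|\nbar|+r-\gd((|\nbar|+r)\xi_{n_1})$, whence $\ssp(P_\nbar)=|\nbar|-\gd((|\nbar|+r)\xi_{n_1})$. As $|\nbar|+r$ is a positive even integer I may apply the $BP$ theorem directly with $2n=|\nbar|+r$: by the doubling identity its hypothesis $\nu\binom{n+s}{k-s}=s$ is exactly $\nu\binom{|\nbar|+r+2s}{n_1-2s}=s$, and its conclusion is $\gd((|\nbar|+r)\xi_{n_1})\ge n_1-6s$. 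Substituting and using $\spa(P_\nbar)\le\ssp(P_\nbar)$ yields $\spa(P_\nbar)\le|\nbar|-n_1+6s$.

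For the immersion bound I would start from Theorem \ref{immthm}, which gives $\imm(P_\nbar)\ge|\nbar|+\gd(-(|\nbar|+r)\xi_{n_1})$, so it suffices to prove $\gd(-(|\nbar|+r)\xi_{n_1})\ge n_1-6s$. Because $\gd(j\xi_{n_1})$ depends only on $j$ mod $2^{\phi(n_1)}$ (the order of the reduced Hopf class in $\widetilde{KO}(P^{n_1})$), I replace $-(|\nbar|+r)$ by the positive even representative $2n=2^N-(|\nbar|+r)$ for large $N$, so that $\gd(-(|\nbar|+r)\xi_{n_1})=\gd(2n\xi_{n_1})$. The $BP$ theorem with this $n$ gives $\gd(2n\xi_{n_1})\ge n_1-6s$ provided $\nu\binom{n+s}{k-s}=s$; with $2n$ now positive, all entries are nonnegative and the doubling identity turns this hypothesis into $\nu\binom{2^N-(|\nbar|+r)+2s}{n_1-2s}=s$, which I would then identify with the stated condition $\nu\binom{-|\nbar|-r+2s}{n_1-2s}=s$.

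That last identification is the only step requiring care: the top entry $-|\nbar|-r+2s$ is negative, so one reads the binomial coefficient via the generalized (product) definition and must check $\nu\binom{2^N-(|\nbar|+r)+2s}{n_1-2s}=\nu\binom{-|\nbar|-r+2s}{n_1-2s}$ for $N$ large. This is a $2$-adic continuity statement: each numerator factor $2^N-(|\nbar|+r)+2s-i$ tends $2$-adically to $-(|\nbar|+r)+2s-i$, so once $N$ exceeds $\nu((|\nbar|+r)-2s+i)$ for every $0\le i<n_1-2s$ the valuation of each factor stabilizes, hence so does the valuation of the quotient by $(n_1-2s)!$. Granting this, the two bounds follow, and the remaining manipulations are the routine bookkeeping the corollary refers to as ``the usual way.''
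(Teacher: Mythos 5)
Your proposal is correct and is exactly the argument the paper intends: it derives both bullets ``in the usual way'' by combining Theorem \ref{immthm} (resp.\ Theorem \ref{spanthm}(1) with Proposition \ref{easy}) with the $BP$ theorem, converting the hypothesis via the doubling identity $\nu\binom{2\ell+2m}{2\ell}=\nu\binom{\ell+m}{\ell}$ and handling the negative multiple $-(|\nbar|+r)\xi_{n_1}$ by periodicity mod $2^{\phi(n_1)}$ (the third bullet of Proposition \ref{easy}). Your explicit $2$-adic continuity check identifying $\nu\binom{2^N-(|\nbar|+r)+2s}{n_1-2s}$ with $\nu\binom{-|\nbar|-r+2s}{n_1-2s}$ is a correct filling-in of a detail the paper leaves implicit.
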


\def\line{\rule{.6in}{.6pt}}

\end{document}